\newtheorem{theorem}{Theorem}
\newtheorem{lemma}[theorem]{Lemma}
\newtheorem{proposition}[theorem]{Proposition}
\newtheorem{conjecture}[theorem]{Conjecture}
\theoremstyle{definition}
\newcommand{\LabelQuote}[2]{\vspace{0.5cm}%
	\refstepcounter{equation}% Incrementa el contador de ecuaciones
	\parbox{13cm}{\em #1}\hfill (\theequation)\label{#2}\\[0.5cm]}
\newcommand{\trw}{\operatorname{tw}}
\newcommand{\prw}{\operatorname{pw}}
\newcommand{\brn}{\operatorname{bn}}
\newcommand{\D}{\mathcal{D}}
\newcommand{\keywords}[1]{\textbf{Keywords:} #1}
\newcommand{\MSC}[1]{\textbf{MSC (2020):} #1}
\title{On the Treewidth of Token and Johnson Graphs\thanks{The authors gratefully acknowledge support from CONACYT FORDECYT-PRONACES/39570/2020, SEP-CONACYT A1-S-8397 and UNAM-PAPIIT IA101423 grants.}}
\author{Ruy Fabila-Monroy\thanks{Departamento de Matem\'aticas, CINVESTAV. \texttt{ruyfabila@math.cinvestav.edu.mx}} \and
	Sergio Gerardo G\'omez-Galicia\footnotemark[2] \thanks{\texttt{sgomez@math.cinvestav.mx}} \and
	C\'esar Hern\'andez-Cruz\thanks{Facultad de Ciencias, Universidad Nacional Aut\'onoma de M\'exico, Mexico City Mexico. \texttt{chc@ciencias.unam.mx}} \and
	Ana Trujillo-Negrete\thanks{Centro de Modelamiento Matemático (CNRS IRL 2807), Universidad de Chile, Santiago, Chile.
		Partially supported by CONACYT(México), Convocatoria 2021 de Estancias Posdoctorales por México en Apoyo por SARS-CoV-2(COVID-19), by ANID/Fondecyt Postdoctorado 3220838, and by ANID Basal Grant CMM 
		FB210005. \texttt{ltrujillo@dim.uchile.cl}}
}
\begin{document}
	
%	\linenumbers

\maketitle

\begin{abstract}
	Let $G$ be a graph on $n$ vertices and $1 \le k \le n$ a fixed integer. 
	The  \textit{$k$-token graph} of  $G$ is the graph $F_k(G)$ whose vertex set consists of all $k$-subsets of the vertex set of $G$, where two vertices $A$ and $B$ are adjacent in $F_k(G)$ whenever their symmetric difference $A\triangle B$ is an edge of $G$. 
	In this
	paper we study the treewidth of $F_k(G)$ when $G$ is  a star, path, or a complete
	graph. We show that  in the first two cases, the treewidth
	is of order $\Theta(n^{k-1})$, and of order $\Theta(n^k)$ in the third case.
	We conjecture that our upper bound for the treewidth of $F_k(K_n)$ is tight. This is particularly relevant since $F_k(K_n)$ is isomorphic to
	the well known Johnson graph $J(n,k)$.
\end{abstract}

\keywords{Token graph, Johnson graph, Bramble number, Treewidth}

\MSC{05C05, 05C75, 05C76}

%\jnlcitation{\cname{%
%		\author{Fabila-Monroy R.},
%		\author{G\'omez-Galicia S. G.},
%		\author{Hern\'andez-Cruz C.}, and
%		\author{Trujillo-Negrete A. L.}}.
%	%\ctitle{} \cjournal{\it } \cvol{}.
%	}

\section{Introduction}

Let $G$ be a graph on $n$ vertices, and let $1 \le k \le n-1 $ be an integer.
The \emph{$k$-token graph} of $G$, denoted by $F_k(G)$, is the graph whose vertex set consists of all $k$-subsets of vertices of $G$,
where two vertices $A$ and $B$ are adjacent in $F_k(G)$ whenever their symmetric difference $A\triangle B$ is an edge of $G$. 
See Figure~\ref{fig:token-graph} for an example. The name ``token graphs''
comes from the following interpretation. Consider $k$ indistinguishable tokens
placed at the vertices of $G$, at most one token per vertex. Create a new graph
whose vertices are all possible token configurations, and make two adjacent if
one token configuration can be reached from the other by sliding a token along an
edge to an unoccupied vertex. The resulting graph is isomorphic to $F_k(G)$. This
interpretation was given by Fabila-Monroy, Flores-Peñaloza, Huemer, Hurtado,
Urrutia, and Wood~\cite{Token}. However, token graphs have been defined
independently several times and under different names
(see~\cite{double_vertex,audenaert,Token,Johns,rudolph,ktuple}).

\begin{figure}[t]
    \centering
    \includegraphics[width=0.5\textwidth]{./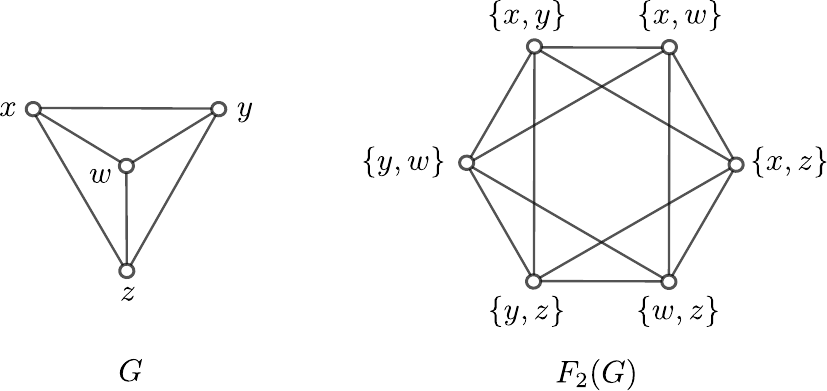}
    \caption{A graph $G$ and its $2$-token graph $F_2(G)$.}
    \label{fig:token-graph}
\end{figure}

A central parameter in structural graph theory is the \emph{treewidth} of a graph, which measures how close a graph is to being a tree. Many $NP$-hard problems become tractable when restricted to graphs of bounded treewidth~\cite{treewidth}. 
Formally, a \emph{tree decomposition} of $G$ is a pair $(T,\mathcal{V})$, where $T$ is a tree and $\mathcal{V} = (V_{t})_{t \in V(T)}$ is a family of sets $V_{t} \subseteq V(G)$, called bags, satisfying the following conditions:
\begin{enumerate}
	\item $V(G)=\bigcup_{t\in V(T)}V_{t}$;
	\item If $uv\in E(G)$, then there exists $t \in V(T)$ such that $u,v \in V_{t}$; and
	\item $V_{t_{1}} \cap V_{t_{3}} \subseteq V_{t_{2}}$ whenever $t_{2}$ is on the path from $t_{1}$ to $t_{3}$ in $T$.
\end{enumerate}
The \emph{width} of a tree decomposition $(T,\mathcal{V})$ is $\max\{|V_t|-1 : t \in V(T)\}$, and the \emph{treewidth} of $G$, denoted $\trw(G)$, is the minimum width over all tree decompositions of $G$ (see Figure \ref{fig:treewidth} for an example). If the tree is restricted to be a path, the decomposition is called a \emph{path decomposition} and the corresponding parameter is the \emph{pathwidth} of $G$, denoted by $\prw(G)$. It is clear from the definitions that $\prw(G)\geq \trw(G)$.
Some basic examples include the facts that the treewidth of a tree is one, while the treewidth of the complete graph on $n$ vertices is $n-1$. Moreover, if $H$ is a subgraph of $G$, then $\trw(H) \le \trw(G)$. 
In this paper, we establish both lower and upper bounds for the treewidth of $F_k(G)$ when $G$ is a star, path, or a complete graph.

For the $k$-token graphs of stars and paths, we establish the following results. 
\begin{restatable}[]{theorem}{stars}
	\label{thm_tw_star}
	Let $S_n$ be the star graph on $n+1$ vertices, and let $1 \le k \le n$ be a fixed integer. Then
	\[
	\trw(F_k(S_n))=\Theta(n^{k-1}).
	\]
\end{restatable} 

\begin{restatable}[]{theorem}{paths}
	\label{proposition_tw_tree}\label{thm_tw_path}
	Let $P_n$ be the path graph on $n$ vertices, and let $1 \le k \le n-1$ be a fixed integer. Then
	\[\trw(F_k(P_n))=\Theta(n^{k-1}). \]
	%	\[
	%	\Theta \left( \left\lfloor \frac{n}{k} \right\rfloor^{k-1} \right) \leq
	%	\trw(F_{k}(P_{n})) \leq \Theta((n-(k-1))^{k-1}).
	%	\]
\end{restatable} 

Furthermore, we determine the exact values of $\trw(F_2(P_n))$ and $\trw(F_2(S_n))$. Specifically, we show that  
\[
\textrm{$\trw(F_2(S_n))=n-1$ \quad and \quad $\trw(F_2(P_n))=\left\lfloor \frac{n}{2}\right\rfloor$.} \]

We point out that the $k$-token graph of the complete graph $K_n$ on $n$ vertices
is the well-known Johnson graph $J(n,k)$; this graph is defined as the graph whose vertices
are all the $k$-subsets of an $n$-set, and where two such subsets are adjacent whenever they
intersect in exactly $k-1$ elements. In this paper, we establish the exact value of $\trw(F_2(K_n))$, which is equal to its pathwidth. 

\begin{restatable}[]{theorem}{johnsontwo}
	\label{thm_tw_2-johnson}
	For $n\ge 4$, 
	\begin{displaymath}
		\prw(F_2(K_n))=\trw(F_2(K_n))=\begin{cases}
			\tfrac{n}{2}\left(\tfrac{n}{2}-1\right)+n-2 & \text{when $n$ is
				even,}\\
			\left(\tfrac{n-1}{2}\right)^2+n-2 & \text{when $n$ is odd.}
		\end{cases}
	\end{displaymath}
\end{restatable}
The treewidth of $F_2(K_n)$ has been computed previously by 
Harvey and Wood~\cite{linegraph}; however, we use a different method
based on a result on extremal graph theory, which may be of independent interest. 

\begin{figure}[t]
	\centering
	\includegraphics[width=0.75\textwidth]{./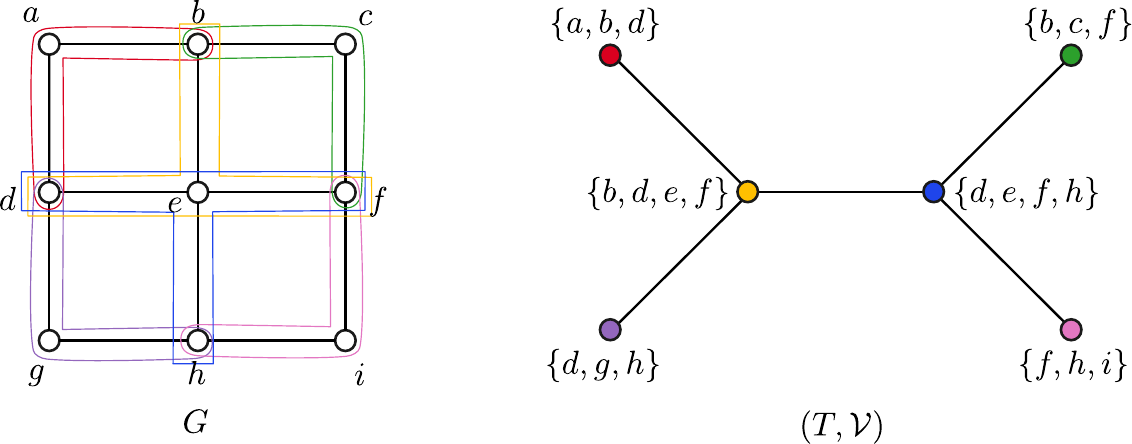}
	\caption{A graph $G$ (left) and an optimal tree decomposition $(T, \mathcal{V})$ (right). The colors in the decomposition indicate the bags that contain the respective vertices in $G$, ensuring that $\trw(G) = 3$.}
	\label{fig:treewidth}
\end{figure}

We also establish lower and upper bounds for 
$\trw(F_k(K_n))$ when $k\ge 3$. 
For the upper bound, we prove the following result:
\begin{restatable}[]{theorem}{johnsonk}
	\label{Theo:twKn}
	\label{thm_tw_k-johnson}
	$\trw(F_k(K_n))$ is at most
	\begin{equation*}
	\begin{aligned}
		\max & \left \{ \left \lfloor\frac{n+1}{k} \right \rfloor\binom{n-\lfloor\frac{n+1}{k}\rfloor}{k-1}+\left \lfloor\frac{n+1}{k} \right \rfloor\sum\limits_{i=2}^{k}\binom{(k-i)\lfloor\frac{n+1}{k}\rfloor}{k-i}-\sum\limits_{i=1}^{k-1}\binom{(k-i-1)\lfloor\frac{n+1}{k}\rfloor+1}{k-i}, \right.  \\
		&  \left. \left \lceil \frac{n}{k} \right \rceil \binom{n-\lceil \frac{n}{k} \rceil}{k-1}+\left \lceil \frac{n}{k} \right \rceil\sum\limits_{i=2}^{k}\binom{(k-i)\lceil \frac{n}{k} \rceil}{k-i}-\sum\limits_{i=1}^{k-1}\binom{(k-i-1)\lceil \frac{n}{k} \rceil+1}{k-i} \right \}-1.
	\end{aligned}
	\end{equation*}
	Further, if $k$ divides $n$ or $n+1$, these two values are equal.
\end{restatable}

For the case $k=3$, we provide the following upper bound, which is consistent with Theorem~\ref{thm_tw_k-johnson}. 
\begin{restatable}[]{corollary}{johnsonthree}
	\label{cor_tw_3-johnson}
	\[\trw(F_3(K_n))\le \left\lceil\frac{n}{3}\right\rceil \binom{n-\left\lceil\frac{n}{3}\right\rceil}{2}+\frac{1}{2}\left\lceil\frac{n}{3}\right\rceil\left(\left\lceil\frac{n}{3}\right\rceil+1\right)-2.\]
\end{restatable}
It is worth mentioning that through computational experimentation, we have confirmed that the upper bound given in  
Corollary~\ref{cor_tw_3-johnson} is tight for certain small values of $n$.

This paper is organized as follows. 
In Section~\ref{section3}, we present the proofs of Theorems~\ref{thm_tw_star} and~\ref{thm_tw_path}, concerning the treewidth of token graphs of stars and paths. 
Section~\ref{sec:f2kn} contains the proof of Theorem~\ref{thm_tw_2-johnson}, which establishes the exact treewidth and pathwidth of $F_2(K_n)$. 
In Section~\ref{sec:fkkn}, we address Theorem~\ref{thm_tw_k-johnson} and Corollary~\ref{cor_tw_3-johnson}, concerning upper bounds for the treewidth of Johnson graphs. 
Finally, in Section~\ref{sec:SR&C}, we present concluding remarks and propose two conjectures.

To prove our results, we make use of fundamental tools from structural and spectral graph theory. These include brambles and their connection to treewidth, the algebraic connectivity of graphs, and spectral properties of token graphs. 
We briefly recall the necessary background and notation. For standard terminology, we refer the reader to~\cite{bondy}. For a finite set $X$, we denote by $\binom{X}{k}$ the family of all $k$-subsets of $X$. Throughout the paper, we assume $V(G) = [n] := \{1, \dots, n\}$ unless stated otherwise, so $V(F_k(G)) = \binom{[n]}{k}$. Note that $F_k(G) \cong F_{n-k}(G)$; thus, we may often assume $1 \leq k \leq \lfloor n/2 \rfloor$.

We say that two subsets of $V(G)$ \emph{touch} if either they have a vertex in
common, or there is an edge between them in $G$. A \emph{bramble},
$\mathcal{B}$, is a family of pairwise touching subsets of vertices of $G$, each
of which induces a connected subgraph. A subset $S$ of vertices of $G$ is said
to \emph{cover} $\mathcal{B}$, if every set of $\mathcal{B}$ contains a vertex
in $S$; we call $S$ a \emph{hitting set} for $\mathcal{B}$. The order of
$\mathcal{B}$, denoted $||\mathcal{B}||$, is the least number of vertices in a
hitting set of $\mathcal{B}$. The \emph{bramble number} of $G$, denoted by 
$\brn(G)$, is the maximum order of a bramble of $G$.
Seymour and Thomas found the following duality theorem, which relates brambles and treewidth.
\begin{theorem}[\cite{Seymour_Thomas}]\label{thm:bramble}
	\label{thm:tw-bramble}
	For any graph $G$, \[\trw(G)=\brn(G)-1.\]
\end{theorem}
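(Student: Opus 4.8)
The plan is to prove the two inequalities $\brn(G)\le \trw(G)+1$ and $\brn(G)\ge \trw(G)+1$ separately. The first says that every bramble furnishes a lower bound for the treewidth and is the routine direction; the second, which must exhibit a bramble certifying the treewidth, is the crux.

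For $\brn(G)\le \trw(G)+1$ I would fix an optimal tree decomposition $(T,\mathcal{V})$ of width $\trw(G)$ and an arbitrary bramble $\mathcal{B}$, and show that a single bag already hits $\mathcal{B}$. For each $B\in\mathcal{B}$ let $T_B\subseteq V(T)$ be the set of nodes $t$ with $V_t\cap B\ne\emptyset$. Since $B$ induces a connected subgraph, the standard fact that the bags meeting a connected subgraph form a subtree shows that $T_B$ induces a subtree of $T$; and since any two bramble sets touch---either sharing a vertex, or joined by an edge that by condition~(2) lies in a common bag---the subtrees $T_B$ pairwise intersect. By the Helly property for subtrees of a tree, $\bigcap_{B\in\mathcal{B}}T_B\ne\emptyset$, so any node $t^{\ast}$ in this intersection has $V_{t^{\ast}}$ meeting every $B$. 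Hence $\|\mathcal{B}\|\le |V_{t^{\ast}}|\le \trw(G)+1$, and taking the maximum over all brambles gives $\brn(G)\le \trw(G)+1$.

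For the reverse inequality I would argue the contrapositive in constructive form: assuming every bramble of $G$ has order at most $k$, I build a tree decomposition in which all bags have size at most $k$, so that $\trw(G)\le k-1$; applied with $k=\brn(G)$ this yields $\trw(G)\le \brn(G)-1$, which together with the first part gives equality. The construction is recursive. Maintaining a current interface $W\subseteq V(G)$ with $|W|\le k$ to be placed in a bag, I search for a separation $(A,B)$ of order at most $k-1$ that splits off a nonempty piece while keeping $W$ inside one side, put the separator into the bag, and recurse on the two sides using the separator as the new interface. The tree of recursive calls together with these bags satisfies the three tree-decomposition axioms by construction, and terminates because each step reduces the size of the piece being processed.

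The main obstacle is proving that the recursion never gets stuck, i.e.\ that whenever a piece is still too large to form a single bag, a separation of order at most $k-1$ of the required form exists. This is exactly where the hypothesis $\brn(G)\le k$ is used: I would show that if no such separation breaks up the current piece, then the connected subsets of that piece that cannot be cheaply separated from the interface assemble into a family of pairwise-touching connected sets with no hitting set of size at most $k$, that is, a bramble of order at least $k+1$, contradicting $\brn(G)\le k$. Verifying that these connected sets genuinely pairwise touch and admit no small hitting set is the technical heart of the theorem---essentially the content of the Seymour--Thomas duality---whereas the bookkeeping showing that the resulting object is a bona fide tree decomposition is then routine.
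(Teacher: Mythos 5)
The paper does not prove this theorem at all: it is imported verbatim from Seymour and Thomas, so your attempt has to be judged against the known proof (e.g.\ Diestel's rendering of it) rather than against anything in the paper. Your first inequality, $\brn(G)\le \trw(G)+1$, is correct and complete: the observation that the nodes whose bags meet a fixed connected set induce a subtree of $T$, that touching forces these subtrees to pairwise intersect (condition~2 of the tree-decomposition handles the edge case), and the Helly property of subtrees, together give a single bag hitting every bramble element. This is the standard argument for the easy direction.

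The hard direction, however, has a genuine gap --- in fact two. First, you explicitly defer the pivotal lemma (``if the recursion gets stuck, the non-separable connected pieces assemble into a bramble of order $>k$''), so what you have is a proof plan, not a proof; verifying pairwise touching and, above all, the absence of a hitting set of size $\le k$ for that assembled family is the entire content of the Seymour--Thomas duality. Second, and more concretely fatal, even granting that lemma your recursion does not yield width $k-1$: your bag at each step contains the current interface $W$ (with $|W|\le k$) together with a separator $S$ of order $\le k-1$, hence up to $2k-1$ vertices, so the decomposition you build only certifies $\trw(G)\le 2k-2$. The bookkeeping you dismiss as routine is exactly where the difficulty lies. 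The actual proof avoids this loss by structuring the induction over brambles themselves (maximal brambles serving as base cases): one shows that for every bramble $\mathcal{B}$ of $G$ there is a $\mathcal{B}$-admissible tree decomposition whose critical bags are \emph{minimum-order hitting sets} of brambles --- of size $\le k$ precisely by the hypothesis $\brn(G)\le k$, not because of any a priori separation bound --- and then applies this statement to the empty bramble to force every bag to have size at most $k$. Choosing separators relative to the bramble and using minimality of covers, rather than grabbing an arbitrary separation of order $\le k-1$, is what makes the exact bound $\trw(G)=\brn(G)-1$ come out, and it is missing from your sketch.
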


We also use a spectral approach based on the algebraic connectivity of a graph. Let $L(G) = D(G) - A(G)$ be the Laplacian matrix of $G$, where $D(G)$ is the degree matrix and $A(G)$ the adjacency matrix. The second smallest eigenvalue of $L(G)$, denoted $\lambda_2(G)$, is called the \emph{algebraic connectivity}. Chandran and Subramanian
showed the following relationship between algebraic connectivity and treewidth
(see also \cite{Amini}). 

\begin{theorem}[\cite{Chandran}]
	\label{chandran} 
	If $G$ is a connected graph with maximum degree $\Delta$, then
	\[
	\frac{|V|}{12\Delta} \lambda_{2}(G)-1 \leq \trw(G).
	\]
\end{theorem}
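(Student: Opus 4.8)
The plan is to derive the inequality from the standard equivalence between treewidth and balanced vertex separators, combined with the Rayleigh-quotient characterization of $\lambda_2(G)$. Write $w = \trw(G)$ and $n = |V|$; the goal is to show $w + 1 \ge \frac{n}{12\Delta}\lambda_2(G)$.

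First I would invoke the classical separator lemma for tree decompositions: any graph admitting a tree decomposition of width $w$ has a \emph{balanced separator}, i.e.\ a set $S \subseteq V$ with $|S| \le w+1$ such that every connected component of $G - S$ has at most $n/2$ vertices. This is proved by the ``centroid bag'' argument: fix an optimal tree decomposition $(T, \mathcal V)$, orient each edge of $T$ toward the side whose bags contain more than half of $V$, and take $S = V_t$ for a bag $t$ that is a sink of this orientation; the connectivity axiom of tree decompositions guarantees that the components of $G - S$ distribute among the branches of $T$ at $t$, each branch carrying at most half the vertices. I would then turn this into a balanced bipartition: since each component of $G - S$ has at most $n/2$ vertices, a greedy assignment of the components into two bins yields a partition $V = A \sqcup S \sqcup B$ in which $A$ and $B$ are unions of components with $|A|, |B|$ both of order $n$; crucially, by construction there are \emph{no} edges of $G$ between $A$ and $B$.

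The heart of the argument is a spectral lower bound on $|S|$. Using the variational characterization
\[
    \lambda_2(G) = \min_{\substack{x \,\perp\, \mathbf 1,\ x \neq 0}} \frac{\sum_{ij \in E(G)} (x_i - x_j)^2}{\sum_{i} x_i^2},
\]
I would plug in the test vector that is constant $|B|$ on $A$, constant $-|A|$ on $B$, and $0$ on $S$; this vector is orthogonal to the all-ones vector $\mathbf 1$. Because $A$ and $B$ are non-adjacent, the only edges contributing to the numerator are those incident to $S$, of which there are at most $\Delta|S|$, so the numerator is $O(\Delta |S| n^2)$; the denominator equals $|A||B|(|A|+|B|)$, which is $\Omega(n^3)$ since $|A|, |B| = \Theta(n)$. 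Substituting gives $\lambda_2(G) \le \frac{c\,\Delta\,|S|}{n}$ for an absolute constant $c$, hence $|S| \ge \frac{n\,\lambda_2(G)}{c\,\Delta}$; combining with $|S| \le w+1$ rearranges into the claimed bound.

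The main obstacle is \emph{bookkeeping the constants} so that the factor $12$ emerges. One must fix the balance ratio of the bipartition (for instance forcing $|A|, |B| \ge (n-|S|)/3$, which is achievable because every component has size at most $n/2$) and choose the precise entries of the test vector so that the ratio of the crude upper bound on the numerator to the sharp lower bound on the denominator comes out with exactly the right constant, while also separately handling the degenerate regime in which $|S|$ is already a constant fraction of $n$ (where the inequality is immediate). Everything else is routine once the separator lemma and the Rayleigh quotient are in place.
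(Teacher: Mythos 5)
This theorem is not proved in the paper at all: it is imported verbatim from Chandran and Subramanian~\cite{Chandran}, so there is no internal proof to compare you against, and your proposal must be judged on its own terms. On those terms it is correct, and it is the standard spectral-separator route (essentially the one in the original reference): the centroid-bag lemma yields $S$ with $|S|\le \trw(G)+1$ and every component of $G-S$ of size at most $n/2$; grouping components yields $V=A\sqcup S\sqcup B$ with no $A$--$B$ edges and $|A|,|B|\ge (n-|S|)/3$; and your test vector ($|B|$ on $A$, $-|A|$ on $B$, $0$ on $S$) gives, since only the at most $\Delta|S|$ edges incident to $S$ contribute,
\[
	\lambda_2(G)\;\le\; \frac{\Delta|S|\max\{|A|,|B|\}^2}{|A||B|\left(|A|+|B|\right)}\;=\;\frac{\Delta|S|}{n-|S|}\cdot\frac{\max\{|A|,|B|\}}{\min\{|A|,|B|\}}\;\le\;\frac{2\Delta|S|}{n-|S|}.
\]
The constant bookkeeping you defer does go through, with room to spare: if $|S|\le n/6$, then $n-|S|\ge 5n/6$, so $|S|\ge \tfrac{5}{12}\cdot\tfrac{n\lambda_2(G)}{\Delta}\ge \tfrac{n\lambda_2(G)}{12\Delta}$; and in this regime the $1/3$-balanced grouping is indeed achievable from components of size at most $n/2$ (the largest component, of size at most $n/2\le \tfrac{2}{3}(n-|S|)$, can be placed alone on one side if it exceeds $(n-|S|)/3$, and greedy balancing works otherwise). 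The one point you leave implicit that must be made explicit is the degenerate regime: ``the inequality is immediate'' when $|S|>n/6$ only because of the standard Laplacian bound $\lambda_2(G)\le 2\Delta$ (e.g.\ by Gershgorin), which gives $\tfrac{n\lambda_2(G)}{12\Delta}\le n/6<|S|\le \trw(G)+1$; without some a priori bound of $\lambda_2$ by $O(\Delta)$ that case does not close. With that fact inserted, the argument is complete and in fact proves a slightly stronger constant than the quoted $12$.
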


The Laplacian spectrum of token graphs was studied by Dalfó, Duque,
Fabila-Monroy, Fiol, Huemer, Trujillo-Negrete, and Zaragoza-Martínez
\cite{Laplacian}. The authors conjectured  that the equality 
\begin{equation}
	\label{eq_algconn}
	\lambda_{2}(F_{k}(G))=\lambda_{2}(G)
\end{equation}
always holds; and proved it for certain families of graphs. 
However, unbeknownst to the authors this is a special case of 
Aldous' spectral gap conjecture. This conjecture was proved by 
Caputo, Liggett and Richthammer \cite{caputo}. 
We use (\ref{eq_algconn}) and Theorem \ref{chandran} to obtain lower bounds for the
treewidth of the token graph of paths and complete graphs.

\section{Stars and Paths}{\label{section3}}

The star, $S_n$, of $n$ leaves is the complete bipartite graph $K_{1,n}$. 
Throughout this section, we assume that $V(S_n)=\{0\} \cup[n]$, where vertex
$0$ is the center of $S_n$. We also use $P_n$ to denote the path $(1,\dots,n)$.

Our first result is for the treewidth of token graphs of stars, namely Theorem~\ref{thm_tw_star}:  
\stars*
\begin{proof}
	As mentioned in the Introduction, we have $F_k(S_n)\cong F_{n+1-k}(S_n)$, so without loss of generality, we may assume $k\le \frac{n+1}{2}$.  In what follows, we show the following bounds:  
	\[
		\frac{1}{12} \cdot \frac{n^{k-1}}{k!} + \Theta(n^{k-2}) \leq
		\trw(F_{k}(S_{n})) \leq \binom{n}{k-1}-1= k \cdot \frac{n^{k-1}}{k!} + \Theta(n^{k-2}).
		\] This directly implies that $\trw(F_k(S_n))=\Theta(n^{k-1})$, as desired.

	The graph $F_k(S_n)$ is semiregular (see Lemma~3.8(i) in~\cite{barik}): its vertices have degree either $k$ or $n-(k-1)$, depending on whether they contain the central vertex $0$. Since we are assuming $k\le \frac{n+1}{2}$, the maximum degree of $F_k(S_n)$ is $\Delta(F_k(S_n))=n-(k-1)$. Additionally, it is well known that $\lambda_{2}(S_{n})=1$ (see \cite{oldandnew}). Therefore,
	by (\ref{eq_algconn}) and Theorem \ref{chandran}, we obtain
	\begin{align*}
		\trw(F_{k}(S_{n}))
		&\ge \frac{1}{12(n-(k-1))}\binom{n+1}{k}-1 \\ 
%		&=   \left( \frac{n}{12(n-(k-1))} \right) \left( \frac{n^{k-1}}{k!}+\Theta(n^{k-2}) \right)\\
%		&=  \frac{n^{k-1}}{12k!}\left( 1+\frac{k-1}{n-(k-1)} \right)+\Theta(n^{k-2})\\
		&=  \frac{1}{12} \cdot \frac{n^{k-1}}{k!}+\Theta(n^{k-2}).
	\end{align*}
	
	To prove the upper bound we construct a tree decomposition $(T,\mathcal{V})$
	of $F_{k}(S_{n})$. Let $T$ be a star graph with $0$ as its center
	vertex, and such that every $A \in \binom{[n]}{k}$ is a
	leaf of $T$. That is, $V(T) = \{0\} \cup \binom{[n]}{k}$, and each vertex $A \in \binom{[n]}{k}$ is adjacent to the center $0$. Let 
	\[
	V_{0} := \left\{ B \in V(F_k(S_n)) :0\in B \right\}
	\textrm{\quad  and \quad }
	V_A:=\{A\}\cup \left \{\left(A \setminus \{a_i\}\right)  \cup \{0\}: a_i \in A \right \},
	\]
	for every $A \in \binom{[n]}{k}$.  Finally, let $\mathcal{V}:=(V_t)_{t \in V(T)}$. 
    See Figure~\ref{fig:stargraph} for an example of this construction. 
	We have the following.
	\begin{enumerate}
		\item Every vertex of $F_{k}(S_{n})$ is contained in some $V_{A}$.
		\item Each edge in $F_k(S_n)$ corresponds to a pair of sets of the form $A$ and $(A \setminus {a_i}) \cup {0}$, where $A \in \binom{[n]}{k}$ and $a_i \in A$.
        Thus, \[A, (A\setminus \{a_i\}) \cup \{0\} \in V_A.\]

		\item For every pair $A, A' \in \binom{[n]}{k}$,  if $|A\cap A'|
		\leq k-2$, then $V_{A} \cap V_{A'} = \emptyset \subset V_{0}$, and
		if $|A\cap A'| = k-1$, then
		\[
		V_{A} \cap V_{A'} = \{(A \cap A')\cup \{0\}\}
		\subset V_{0}.
		\]
	\end{enumerate}
	Therefore, $(T,\mathcal{V})$ is a tree decomposition of $F_k(S_n)$. Its width is
	equal to
	\[
	\binom{n}{k-1}-1 = \frac{1}{(k-1)!} n^{k-1} + \Theta(n^{k-2}).
	\]
\end{proof}

\begin{figure}
	\centering
	\includegraphics[width=0.9\linewidth]{./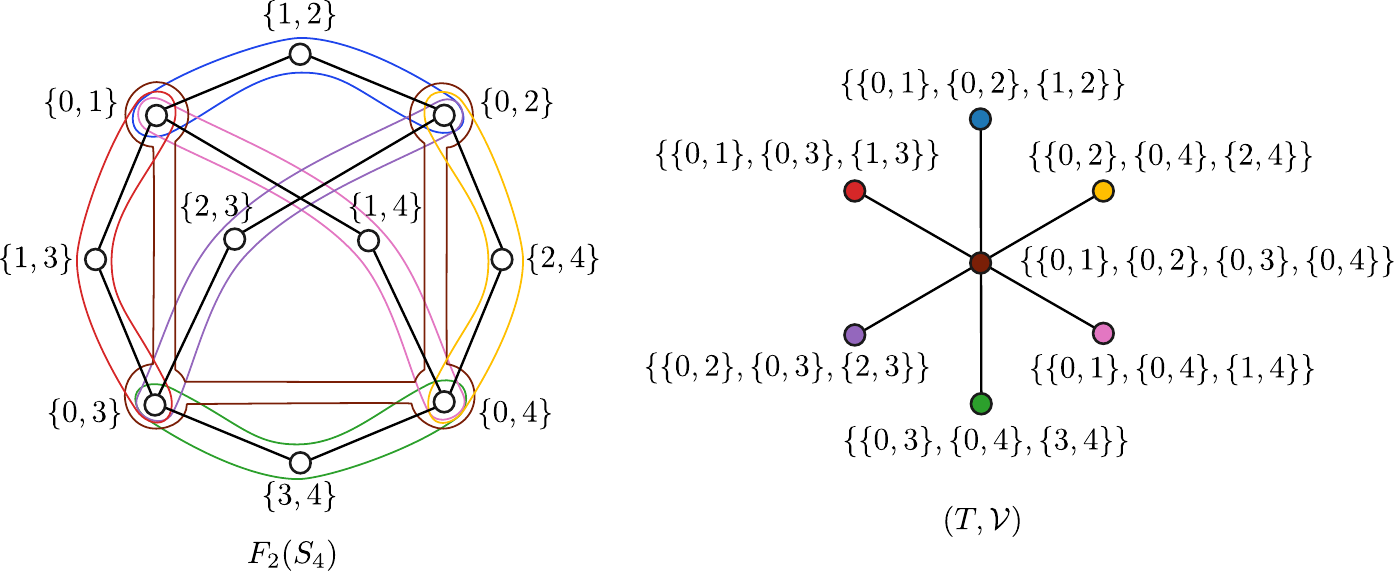}
	\caption{The $2$-token graph $F_2(S_4)$ of the star $S_4$ (left) and an optimal tree decomposition $(T, \mathcal{V})$ (right). The colors in the decomposition indicate the bags that contain the respective vertices in $F_2(S_4)$. }  
	\label{fig:stargraph}
\end{figure}

We now compute the exact value of the treewidth of the $2$-token graph of a
star. 
\begin{proposition}
	\label{pro:bramble_star}
	\[
	\trw(F_{2}(S_{n})) = n-1.
	\]
\end{proposition}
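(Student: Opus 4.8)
The upper bound is already in hand: taking $k=2$ in \Cref{treewidth_star} gives $\trw(F_2(S_n)) \le \binom{n}{1}-1 = n-1$, so the plan is to prove the matching lower bound $\trw(F_2(S_n)) \ge n-1$. By the bramble duality of \Cref{thm:bramble} it suffices to exhibit a bramble of order $n$. First I would sort the vertices of $F_2(S_n)$ by type: the \emph{center-pairs} $\{0,i\}$ with $i \in [n]$, and the \emph{leaf-pairs} $\{i,j\}$ with $i,j \in [n]$. A one-line check of the adjacency rule shows that two center-pairs are never adjacent (their symmetric difference $\{i,j\}$ is not an edge of $S_n$), two leaf-pairs are never adjacent (their symmetric difference cannot contain $0$), and $\{0,i\}$ is adjacent precisely to the leaf-pairs containing $i$ (since $\{0,i\}$ and $\{i,j\}$ have symmetric difference $\{0,j\}$). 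In other words $F_2(S_n)$ is the subdivision of $K_n$, with $\{0,i\}$ playing the role of vertex $i$ and $\{i,j\}$ the vertex subdividing the edge $ij$; this is the picture I would keep in mind while building the bramble.

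The naive choice $B_i := \{\{0,i\}\} \cup \{\{i,j\}: j \ne i\}$ is connected (a star centered at $\{0,i\}$) and the $B_i$ pairwise touch since $\{i,j\} \in B_i \cap B_j$; but this sharing is fatal, because a single leaf-pair hits two sets, so an edge cover of $[n]$ gives a hitting set of size about $n/2$ and the order is far too small. The key idea is to break the overlaps by orienting each leaf-pair to exactly one of the two sets. I would therefore set
\[ B_i := \{\{0,i\}\} \cup \{\{i,j\}: j > i\}, \qquad i \in [n]. \]
Each $B_i$ is still a star centered at $\{0,i\}$, hence connected, and now the $B_i$ are pairwise disjoint: the leaf-pair $\{a,b\}$ with $a<b$ lies only in $B_a$, and $\{0,i\}$ lies only in $B_i$.

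It remains to verify pairwise touching and to read off the order. For $i<j$ the leaf-pair $\{i,j\}$ belongs to $B_i$ and is adjacent to $\{0,j\}\in B_j$ (their symmetric difference is the edge $\{0,i\}$), so $B_i$ and $B_j$ touch; hence $\{B_1,\dots,B_n\}$ is a bramble. Because the sets are pairwise disjoint, every hitting set must spend a distinct vertex on each $B_i$, so the order is exactly $n$. \Cref{thm:bramble} then yields $\trw(F_2(S_n)) = \brn(F_2(S_n))-1 \ge n-1$, matching the upper bound and giving equality.

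I expect the only genuine obstacle to be the \emph{construction} of the bramble rather than its verification: the obvious overlapping stars fail, and the insight is that orienting each subdivision vertex to a single endpoint makes the branch sets disjoint while preserving pairwise adjacency (this is precisely a realization of $K_n$ as a minor of the subdivision). Once disjointness is secured the order-$n$ bound is immediate, and every adjacency check reduces to the single symmetric-difference computation $\{0,i\}\,\triangle\,\{i,j\} = \{0,j\}$.
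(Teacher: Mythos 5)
Your proof is correct and takes essentially the same approach as the paper: the identical upper bound via \Cref{treewidth_star}, and the same lower-bound bramble of $n$ pairwise disjoint stars centered at the center-pairs $\{0,i\}$, with touching witnessed by adjacencies of the form $\{0,i\}\sim\{i,j\}$ and \Cref{thm:bramble} finishing the argument. The only difference is cosmetic: the paper's sets $B_i=\{\{s,i\}\colon 0\le s<i\}$ assign each leaf-pair to its \emph{larger} endpoint, while yours assign it to the smaller one, which is the same construction up to the relabeling $i\mapsto n+1-i$.
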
 

\begin{proof}
	As shown in the proof of Theorem~\ref{thm_tw_star}, we have 
	\[\trw(F_{2}(S_{n})) \le \binom{n}{1}-1=n-1.\]
	For the lower bound, we construct
	a bramble of $F_2(S_n)$ in which each hitting set has at least $n$ vertices.   For every $1 \le
	i \le n$, let 
	\[
	B_i := \{ \{s,i\} :\ 0 \le s < i \}.
	\]
	Each $B_i$ induces a connected subgraph of $F_2(S_n)$. Moreover, for every $1 \le i < j \le n$, the pair $\{
	\{0,i\}, \{i,j\} \}$ is an edge of $F_2(S_n)$ with one endpoint in  $B_i$ and the other in $B_j$, so the sets $B_i$ and $B_j$ are touching. Thus, the collection
	\[
	\mathcal{B} := \{ B_i \colon 1 \le i \le n \},
	\]
	is a bramble of $F_2(S_n)$. Since $B_i \cap B_j =\emptyset$, any hitting
	set of $\mathcal{B}$ must contain at least one vertex from each $B_i$, and therefore must have cardinality at least $n$. By Theorem
	\ref{thm:tw-bramble}, we conclude that
	\[
	\trw(F_2(S_n)) \ge n-1.
	\]
\end{proof}

We now turn our attention to token graphs of paths. To analyze them, we use the notion of Cartesian products. 
Let $G_1, \dots, G_n$ be graphs. The \textit{Cartesian product} of $G_1, \dots, G_n$ is the graph $G_1 \square\cdots\square G_n$ with vertex set $V(G_1) \times \cdots \times V(G_n)$, where $(x_1, \dots, x_n)$ is adjacent to $(y_1, \dots, y_n)$ if and only if there exists an index $1 \leq i \leq n$ such that $x_i$ is adjacent to $y_i$ and $x_j = y_j$ for all $j \neq i$.

\begin{lemma}
	\label{lem:cartesian}
	If $G_1, \dots, G_k$ are vertex-disjoint subgraphs of $G$, then
	\[
	\trw(F_k(G)) \ge \trw(G_1 \square \cdots \square G_k).
	\]
\end{lemma}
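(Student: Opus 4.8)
The plan is to exhibit $G_1 \square \cdots \square G_k$ as a subgraph of $F_k(G)$ and then invoke the subgraph-monotonicity of treewidth recorded in Section~\ref{sec:prelim}, namely that $\trw(H) \le \trw(G)$ whenever $H$ is a subgraph of $G$. Since the $G_i$ are pairwise disjoint subgraphs of $G$, their vertex sets $V(G_1), \dots, V(G_k)$ are pairwise disjoint subsets of $[n]$; this disjointness is exactly what allows a tuple of vertices, one taken from each factor, to be read off as a single $k$-subset of $[n]$.

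Concretely, I would define a map $\phi \colon V(G_1) \times \cdots \times V(G_k) \to \binom{[n]}{k}$ by $\phi(u_1, \dots, u_k) = \{u_1, \dots, u_k\}$. Because the $u_i$ lie in pairwise disjoint sets, the image is genuinely a set of $k$ distinct elements, so it is a valid vertex of $F_k(G)$, and $\phi$ is injective: from the set $\{u_1, \dots, u_k\}$ one recovers each coordinate as the unique element lying in $V(G_i)$. Thus $\phi$ is a bijection onto its image.

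The key verification is that $\phi$ carries edges of the Cartesian product to edges of $F_k(G)$. Two vertices of $G_1 \square \cdots \square G_k$ are adjacent precisely when they agree in all but one coordinate $i$, with the $i$-th entries $u_i, v_i$ satisfying $u_i v_i \in E(G_i) \subseteq E(G)$. Here disjointness is used a second time: since $v_i \in V(G_i)$ while the unchanged entries lie in the other $V(G_j)$, we have $v_i \notin \phi(u_1,\dots,u_k)$ and, symmetrically, $u_i \notin \phi(v_1,\dots,v_k)$. Hence the symmetric difference of $\phi(u_1,\dots,u_k)$ and $\phi(v_1,\dots,v_k)$ is exactly $\{u_i, v_i\}$, which is an edge of $G$, so the two images are adjacent in $F_k(G)$ by definition. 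Consequently $\phi$ is an isomorphism from $G_1 \square \cdots \square G_k$ onto a subgraph of $F_k(G)$, and applying subgraph-monotonicity yields $\trw(F_k(G)) \ge \trw(G_1 \square \cdots \square G_k)$.

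I do not anticipate a genuine obstacle here; the only points requiring care are the bookkeeping that makes $\phi$ well defined and injective and the computation of the symmetric difference, both of which hinge on the disjointness hypothesis on the $G_i$. One might also note that the image of $\phi$ need not be an \emph{induced} subgraph of $F_k(G)$, but this is immaterial, since subgraph-monotonicity of treewidth does not require the subgraph to be induced.
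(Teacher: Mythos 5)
Your proof is correct and follows exactly the paper's route: the paper also establishes the lemma by observing that the token configurations with exactly one token in each $G_i$ yield a copy of $G_1 \square \cdots \square G_k$ inside $F_k(G)$ and then invoking subgraph-monotonicity of treewidth. You have merely made explicit the details (well-definedness, injectivity, the symmetric-difference computation, and the non-induced-subgraph caveat) that the paper's one-line proof leaves implicit.
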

\begin{proof}
	Consider the subgraph of $F_k(G)$ induced by the token configurations where exactly one token is placed in each $G_i$. This subgraph is isomorphic to $G_1 \square \cdots \square G_k$, which implies the desired inequality. 
\end{proof}

To obtain a lower bound for the treewidth of $F_{k}(P_{n})$, we use the following
result of Hickingbotham and Wood~\cite{Structural} regarding the treewidth of
$d$-dimensional grids. 

\begin{theorem}[\cite{Structural}]
	\label{structural}
	Let $d \ge 2$ be a fixed integer, and let $n_1 \ge \cdots \ge n_d \ge 1$. Then 
	\[
	\trw(P_{n_{1}} \square \cdots \square P_{n_{d}}) = \Theta \left(
	\prod_{j=2}^{d}n_{j} \right).
	\]
\end{theorem}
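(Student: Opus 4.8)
The plan is to establish matching bounds of order $\prod_{j=2}^{d} n_j = N/n_1$, where $N=\prod_{j=1}^{d}n_j$ is the number of vertices and $n_1$ is the longest side. I would treat the upper and lower bounds separately, the latter being the substantial half.

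For the upper bound I would exhibit an explicit path decomposition obtained by sweeping a hyperplane along the longest direction. Write the grid as $n_1$ disjoint copies $H_1,\dots,H_{n_1}$ of $H:=P_{n_2}\square\cdots\square P_{n_d}$, where $H_i$ and $H_{i+1}$ are joined by a perfect matching. Taking the bags $V_i:=V(H_i)\cup V(H_{i+1})$ arranged along a path, every edge lies inside some bag (the edges within a copy, and the matching edges between consecutive copies), and each vertex occurs only in the two consecutive bags $V_{i-1},V_i$. This is a valid path decomposition of width $2|V(H)|-1$, so $\trw(P_{n_1}\square\cdots\square P_{n_d})\le\prw(P_{n_1}\square\cdots\square P_{n_d})\le 2\prod_{j=2}^{d}n_j-1$.

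For the lower bound the plan is to combine the balanced-separator formulation of treewidth with an edge-isoperimetric inequality for the box. First, a centroid-bag argument applied to a tree decomposition of width $w$ yields a separator $S$ with $|S|\le w+1$ whose removal leaves components each of size $\le N/2$; grouping these components greedily produces a bipartition $V\setminus S=A\sqcup B$ with no $A$--$B$ edges and with $|A|,|B|$ each a constant fraction of $N$ (one may assume $|S|\le N/4$, as otherwise $|S|$ already exceeds $\prod_{j=2}^d n_j$). Since $A$ and $B$ are non-adjacent, every edge of the edge-boundary $\partial_e A$ joins $A$ to $S$, whence $|\partial_e A|\le 2d\,|S|$. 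It therefore suffices to prove that a constant-fraction set has edge-boundary of order $N/n_1$.

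The heart of the argument, and the step I expect to be the main obstacle, is this edge-isoperimetric inequality. Rather than invoking the full Bollob\'as--Leader machinery, I would prove it by a canonical-path congestion argument: route a path between each ordered pair $(u,v)$ by correcting the coordinates one at a time (first $x_1$, then $x_2$, and so on). A direct count shows that any fixed direction-$j$ edge at height $x_j$ lies on at most $\tfrac{N}{n_j}\cdot 2x_j(n_j-x_j)\le\tfrac{1}{2}Nn_j\le\tfrac{1}{2}Nn_1$ of these paths, so the congestion satisfies $\rho\le Nn_1/2$. As each of the $2|A|\,|V\setminus A|$ separated ordered pairs has its canonical path crossing $\partial_e A$, counting incidences gives $|\partial_e A|\ge 2|A|\,|V\setminus A|/\rho=\Omega(N/n_1)$. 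Combined with the separator reduction this yields $w+1\ge|S|\ge\Omega(N/n_1)=\Omega\bigl(\prod_{j=2}^{d}n_j\bigr)$, matching the upper bound; the only delicate points are the monotonicity bookkeeping in the congestion count and ensuring both sides of the bipartition remain a constant fraction of $N$.
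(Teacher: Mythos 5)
Your proposal is correct, but note first that the paper contains no proof of this statement to compare against: Theorem~\ref{structural} is imported as a black box from Hickingbotham and Wood \cite{Structural}, where it is obtained through their general machinery for Cartesian products of graphs. Your argument is therefore a genuinely independent, self-contained route, and it checks out. The sweep decomposition with bags $V_i=V(H_i)\cup V(H_{i+1})$ is a valid path decomposition of width $2\prod_{j=2}^{d}n_j-1$, which suffices for a $\Theta$ bound. On the lower-bound side, the chain of reductions is sound: the centroid-bag lemma gives $|S|\le w+1$ with all components of size at most $N/2$; greedy regrouping then makes both sides of the bipartition a constant fraction of $N$ once $|S|\le N/4$ (and if $|S|>N/4$ you get $|S|\ge\frac{1}{4}\prod_{j=2}^{d}n_j$ directly --- a quarter of the target rather than ``exceeds'' it, which is harmless here); since every boundary edge of $A$ lands in $S$ and the grid has maximum degree $2d$, indeed $|\partial_e A|\le 2d|S|$. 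The congestion count is the delicate step and your bookkeeping is right: a direction-$j$ edge between heights $x_j$ and $x_j+1$ forces $v_i$ for $i<j$ and $u_i$ for $i>j$, leaving $\prod_{i\ne j}n_i\cdot 2x_j(n_j-x_j)\le\frac{1}{2}Nn_j\le\frac{1}{2}Nn_1$ ordered pairs, whence $|\partial_e A|\ge 2|A|\,|V\setminus A|/\rho=\Omega(N/n_1)$ and $w+1\ge\Omega\bigl(\frac{1}{d}\prod_{j=2}^{d}n_j\bigr)$. What your approach buys relative to citing \cite{Structural} is elementarity and explicit constants, avoiding both the product-structure results and the full Bollob\'as--Leader edge-isoperimetric theorem; the price is constants that degrade with $d$, which is immaterial in this paper since $d=k$ is fixed.
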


We are now ready to prove Theorem~\ref{thm_tw_path}, which we restate for completeness.
\paths*
\begin{proof}
	We begin by showing that $\trw(F_k(P_n)) = \Omega(n^{k-1})$.
    Consider a partition of $P_n$ into $k$ vertex-disjoint subpaths $P'_1, \dots, P'_k$, each with $\left\lfloor n/k \right\rfloor$ consecutive vertices. Specifically, for each $i = 1, \dots, k$, define 
    \[
    V(P'_i) := \left\{ (i-1)\left\lfloor \frac{n}{k} \right\rfloor + 1, \dots, i\left\lfloor \frac{n}{k} \right\rfloor \right\}.
    \]
    Let $H$ be the subgraph of $F_k(P_n)$ induced by the vertex set
	\[V(H):=\left\{\{x_1,\dots,x_k\}\in V(F_k(P_n)):x_i\in V(P'_i) \textrm{ for each }i=1,\dots,k\right\}.\]
	It is readily seen that $H\cong P'_1\square\dots\square P'_k$. Since $k$ is fixed, 
	Lemma~\ref{lem:cartesian} and
	Theorem~\ref{structural} imply
	\[\trw(F_k(P_n))= \Omega\left(\left\lfloor \frac{n}{k}\right\rfloor ^{k-1}\right)=\Omega\left(n^{k-1}\right).\]
	
	Next, to establish the upper bound, we argue that $\trw(F_k(P_n)) = O(n^{k-1})$. Define the graph $H'$ as the Cartesian product of $k$ copies of $P_{n-(k-1)}$, with $V(P_{n-(k-1)}):=\{1,\dots,n-k+1\}$. 
	Now, define the map $f:V(F_k(P_n))\to V(H')$ as follows. For each vertex $A = \{x_1, \dots, x_k\}\in V(F_k(P_n))$, with $x_1 < \cdots < x_k$, define 
	\[f(A):=(y_1,\dots,y_k), \textrm{ where }y_i=x_i-(i-1) \textrm{ for each }i=1,\dots,k. \]
	
	We will show that $f$ is an isomorphism. It is readily seen that $f$ is a bijection, so it remains to show that $f$ preserves adjacencies and non-adjacencies. Fix a vertex $A=\{x_1,\dots,x_k\}$ in $F_k(P_n)$, and let $B$ be another vertex of $F_k(P_n)$. Observe that $A$ and $B$ are adjacent in $F_k(P_n)$ if and only if there exists $i\in [k]$ such that $B=(A\setminus \{x_i\})\cup \{x'_i\}$, where  $x'_i=x_i-1$ or $x'_i=x_i+1$. This adjacency condition holds if and only if  $f(A)(j)=f(B)(j)$ for all $j\ne i$, whereas $f(A)(i)=x_i-(i-1)$ and $f(B)(i)=x'_i-(i-1)$, meaning that $f(A)(i)$ and $f(B)(i)$ are adjacent in the $i$-th copy of $P_{n-(k-1)}$. This occurs precisely when $f(A)$ and $f(B)$ are adjacent in $H'$. Thus, $AB$ is an edge of $F_k(P_n)$ if and only if $f(A)f(B)$ is an edge of $H'$. Hence, $f$ is an isomorphism.  
	
	Finally, since $k$ is fixed, Theorem~\ref{structural} implies that
	\[\trw(F_k(P_n))=O\left((n-(k-1))^{k-1} \right)=O(n^{k-1}). \]
	The result follows. 
\end{proof}

We now focus on establishing the exact treewidth of $F_2(P_n)$. To this end, we use a combinatorial characterization of treewidth due to Lucena~\cite{lucena}, which is based on vertex orderings and \textit{borders}. 
Given a graph $G$ and a subset $S \subseteq V(G)$, the subgraph of $G$ induced by $S$ is denoted by $G[S]$.  The \textit{border} of $S$, denoted by $\beta(S)$, is defined as follows. 
\begin{itemize}
	\item If $G[S]$ is connected, then $\beta(S)=|N(S)\setminus S|$, where
	$N(S)$ denotes the neighborhood of $S$; 
	\item If $G[S]$ is disconnected, then $\beta(S)=\max\limits_{C \in \mathcal{C}} \beta(C)$,
	where $\mathcal{C}$ is the set of components of $G[S]$.
\end{itemize}

An ordering $\pi$ of the vertices of $G$ is a bijection from $[n]$ to $V(G)$. 
Let $\pi$ be a fixed ordering of the vertices of $V(G)$. For $i \in [n]$,
let $T_i:=\{\pi(1),\dots,\pi(i)\}$. The \textit{max border} of $G$ with
respect to $\pi$, denoted by $MB_\pi(G)$, is defined as 
\[
MB_\pi(G):=\max_{i \in [n]} \beta(T_i).
\] 
Let the \textit{minimax border size} of a graph $G$ be given by 
\[
MMB(G):=\min_{\textrm{orderings } \pi} MB_\pi(G).
\]

Lucena proved the following characterization:
\begin{theorem}[\cite{lucena}]
	\label{theorem_minimax} 
	The minimax border size of $G$ equals its treewidth.
\end{theorem}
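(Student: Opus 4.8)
The plan is to establish the two inequalities $\trw(G) \le MMB(G)$ and $MMB(G) \le \trw(G)$ separately. For the first I would argue through brambles and the duality of \Cref{thm:bramble}; for the second I would convert an optimal tree decomposition into an ordering of $V(G)$ whose max border is controlled by the width.

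For $\trw(G) \le MMB(G)$, by \Cref{thm:bramble} it suffices to show that every bramble $\mathcal{B}$ satisfies $||\mathcal{B}|| \le MB_\pi(G) + 1$ for an arbitrary ordering $\pi$; taking the maximum over brambles and the minimum over $\pi$ then yields $\brn(G) - 1 \le MMB(G)$. Fix $\pi$ and, for each $B \in \mathcal{B}$, let $\ell_B$ be the position of the last vertex of $B$ in $\pi$, so that $B \subseteq T_{\ell_B}$. Let $i^\ast = \min_B \ell_B$, let $B^\ast$ attain this minimum, set $v^\ast = \pi(i^\ast) \in B^\ast$, and let $C^\ast$ be the component of $G[T_{i^\ast}]$ containing the connected set $B^\ast$. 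I claim that $H := (N(C^\ast) \setminus C^\ast) \cup \{v^\ast\}$ is a hitting set for $\mathcal{B}$. Indeed, any $B$ with $\ell_B = i^\ast$ contains $v^\ast$; and any $B$ with $\ell_B > i^\ast$ has a vertex outside $T_{i^\ast}$ while, since $B$ touches $B^\ast \subseteq C^\ast$, it either shares a vertex with $C^\ast$ or has an edge leaving $C^\ast$. In the latter case the endpoint outside $C^\ast$ already lies in $N(C^\ast)\setminus C^\ast$; in the former case connectivity of $B$ forces a vertex of $B$ onto the border, since following a path in $B$ from a vertex of $C^\ast$ to a vertex outside $T_{i^\ast}$, the first vertex leaving $C^\ast$ is a neighbor of $C^\ast$ not in $C^\ast$, hence in $N(C^\ast)\setminus C^\ast \subseteq H$. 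Therefore $||\mathcal{B}|| \le |H| = |N(C^\ast)\setminus C^\ast| + 1 \le \beta(T_{i^\ast}) + 1 \le MB_\pi(G) + 1$, as desired.

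For $MMB(G) \le \trw(G)$, I would start from a tree decomposition $(T,\mathcal{V})$ of width $w = \trw(G)$, which I may assume is reduced (no bag contained in an adjacent bag) or even nice. Rooting $T$ and assigning each vertex $v$ to the highest bag containing it, I would list the vertices of $V(G)$ in a post-order traversal of $T$, appending the vertices owned by a node consecutively. The crucial property of tree decompositions is that each bag is a separator: for any prefix $T_i$ arising in this process, every not-yet-processed neighbor of the processed set lies in the current bag, and, decomposing $G[T_i]$ into components, each component $C$ sees only the separator $V_s \cap V_{\mathrm{parent}(s)}$ of the subtree $s$ that produced it. Reducedness gives $|V_s \cap V_{\mathrm{parent}(s)}| \le |V_s| - 1 \le w$, so $\beta(T_i) \le w$ for every $i$, whence $MMB(G) \le MB_\pi(G) \le w$.

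The main obstacle is the bookkeeping in this second construction: one must verify that for \emph{every} prefix, including those that split a node's owned vertices, each component's outer neighborhood is confined to a single bag-separator of size at most $w$, rather than to a union of separators whose total size could grow to the pathwidth. This is exactly where the max-over-components clause in the definition of $\beta$ is indispensable: branching the decomposition at join nodes lets each component charge its border to one separator, turning what would otherwise be a pathwidth bound into the sharp treewidth bound. Controlling the off-by-one (securing $w$ rather than $w+1$) through a reduced or nice tree decomposition is the delicate step, while the bramble direction above is comparatively routine.
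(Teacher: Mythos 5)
The paper does not actually prove this statement: it is imported verbatim from Lucena's work via the citation \cite{lucena}, so there is no in-paper proof to compare against, and your attempt has to be judged on its own. Your first direction, $\trw(G)\le MMB(G)$, is complete and correct: fixing an ordering $\pi$, taking $i^\ast$ minimal such that some bramble element lies entirely in $T_{i^\ast}$, and hitting everything with $(N(C^\ast)\setminus C^\ast)\cup\{\pi(i^\ast)\}$ is exactly the right move, and your use of the component $C^\ast$ (rather than all of $T_{i^\ast}$) is what makes the bound match the max-over-components clause in the definition of $\beta$. Combined with \Cref{thm:bramble} this direction is airtight.

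The second direction is where you have a genuine gap, and you flag it yourself: the claim that every component of every prefix has its outer neighborhood confined to a single bag-separator of size at most $w$ is the entire content of this direction, and you assert it rather than prove it. Moreover, the specific invariant you state is not quite right: a component $C$ containing vertices owned by the current node $s$ has its border confined to $V_s$ itself (minus an owned vertex of $C$), not to an adhesion $V_s\cap V_{\mathrm{parent}(s)}$, and ancestor-owned neighbors can enter completed branches without passing through $s$, so ``the separator of the subtree that produced $C$'' needs care. The clean way to close the hole, with your exact construction (owner $=$ topmost bag of $T_v$, post-order, owned vertices output consecutively), is: for a component $C$ of $G[T_i]$, set $R:=\bigcup_{x\in C}T_x$, a subtree of $T$ with top node $h$, where $h$ is either completed or the current node $s$. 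First check that any unprocessed neighbor $u$ of $C$ has its owner at $s$ or a proper ancestor of $s$ (a not-yet-entered branch has an entirely unvisited subtree, so $T_u$ cannot meet any $T_x$ with $x$ processed). Then $T_u$ runs from $o(u)$, which lies outside the subtree rooted at $h$, down into $R$, hence passes through $h$; so $N(C)\setminus C\subseteq V_h$. Since the vertex $x^\ast\in C$ whose owner is $h$ satisfies $x^\ast\in V_h\cap C$, you get $|N(C)\setminus C|\le |V_h|-1\le w$ directly. Note this also shows your worry about the off-by-one is unfounded: no reduced or nice decomposition is needed, because the component always supplies a vertex of the bag that contains its border, so the width bound $w$ comes out automatically.
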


We now use this result to compute the exact value of the treewidth of the 2-token graph of a path. 

\begin{proposition}
	\label{proposition_minimax_path}
	\[
	\trw(F_{2}(P_{n})) = \left\lfloor\frac{n}{2}\right\rfloor.
	\]
\end{proposition}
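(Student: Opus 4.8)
The plan is to identify $F_2(P_n)$ with the triangular subgrid furnished by the isomorphism of Proposition~\ref{proposition_tw_tree} and to attack the two bounds by different tools: the upper bound through Lucena's minimax-border characterization (Theorem~\ref{theorem_minimax}), and the lower bound through the Cartesian-product inequality of Lemma~\ref{lem:cartesian}. Throughout I describe a vertex as a pair $\{i,j\}$ with $1\le i<j\le n$ and group the vertices by the value of the \emph{sum} $\sigma=i+j$. Since a single move slides one token one step, every edge of $F_2(P_n)$ joins a pair of sum $\sigma$ to a pair of sum $\sigma\pm 1$; hence each level $L_\sigma=\{\{i,j\}:i+j=\sigma\}$ is an independent set whose removal separates the lower levels from the higher ones. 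A direct count gives $|L_\sigma|\le\lfloor n/2\rfloor$ for every $\sigma$, the maximum being attained around the central level $\sigma=n+1$; this is exactly the quantity we must match.

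For the upper bound I would order $V(F_2(P_n))$ by increasing $\sigma$, breaking ties \emph{within} each level by the value of $i$, but with the sorting direction reversed on the two sides of the centre (increasing $i$ when $\sigma\ge n+1$, decreasing $i$ when $\sigma\le n+1$). By Theorem~\ref{theorem_minimax} it suffices to show that every prefix $T$ of this ordering satisfies $\beta(T)\le\lfloor n/2\rfloor$. The key computation is local: while a level $L_\sigma$ is consumed one vertex at a time, each newly added vertex removes one not-yet-processed vertex of $L_\sigma$ from the border and contributes exactly one \emph{new} forward neighbour in $L_{\sigma+1}$ (its other forward neighbour being already present), so the border stays pinned at $|L_\sigma|\le\lfloor n/2\rfloor$ until the forward neighbours run out near the boundary, after which it only decreases. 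The reversal of the sorting direction is what forces the first vertex processed in a level to sit at the ``thin'' end, so that it contributes only one new forward neighbour rather than two. I expect this off-by-one to be the main obstacle, and I would resolve it by verifying that, on each side of the centre, the net change of the border per step is zero. (Concretely, for $n=5$ the naive monotone order already produces a prefix of border $3$, whereas the reversed order never exceeds $2=\lfloor 5/2\rfloor$.)

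For the lower bound I would split the path. Let $G_1=P_n[\{1,\dots,\lfloor n/2\rfloor\}]$ and $G_2=P_n[\{\lfloor n/2\rfloor+1,\dots,n\}]$, two disjoint subpaths on $\lfloor n/2\rfloor$ and $\lceil n/2\rceil$ vertices. Lemma~\ref{lem:cartesian} then gives
\[
    \trw(F_2(P_n))\ \ge\ \trw\!\left(P_{\lfloor n/2\rfloor}\,\square\,P_{\lceil n/2\rceil}\right)=\min\!\left(\lfloor n/2\rfloor,\lceil n/2\rceil\right)=\left\lfloor\frac n2\right\rfloor,
\]
where I invoke the classical exact value $\trw(P_a\square P_b)=\min(a,b)$ for a two-dimensional grid, which sharpens the $\Theta$-statement of Theorem~\ref{structural} in the case $d=2$ and is proved, for instance, by exhibiting a bramble of order $\min(a,b)+1$. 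Together with the upper bound this yields $\trw(F_2(P_n))=\lfloor n/2\rfloor$. As an alternative that stays entirely inside the toolkit already assembled, the same lower bound can be extracted from Theorem~\ref{theorem_minimax} by arguing that in \emph{every} vertex ordering some prefix must have border at least $\lfloor n/2\rfloor$; this amounts to saying that the central level $L_{n+1}$ cannot be crossed cheaply, and it mirrors the equality case of the upper-bound computation above.
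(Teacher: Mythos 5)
Your plan follows the same two\nobreakdash-pronged skeleton as the paper: Lucena's theorem (Theorem~\ref{theorem_minimax}) applied to an ordering by the level sums $\sigma=x_1+x_2$ for the upper bound, and Lemma~\ref{lem:cartesian} with a grid for the lower bound. But your deviation in the tie-breaking rule is substantive, and in fact it \emph{repairs} the paper's argument rather than merely restating it. The paper orders each level $\mathcal{D}_\sigma$ monotonically by increasing smaller coordinate, and its verification rests on the strict inequality $|\mathcal{D}_{i+1}|-j<\lfloor n/2\rfloor-j$, which is false whenever $|\mathcal{D}_{i+1}|=\lfloor n/2\rfloor$; for odd $n$ this already happens at the levels $\sigma\in\{n,n+1\}$, since then $|\mathcal{D}_n|=|\mathcal{D}_{n+1}|=\lfloor n/2\rfloor$. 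Your $n=5$ sanity check is correct: under the paper's ordering the prefix $\{1,2\},\{1,3\},\{1,4\}$ has border $\{\{2,3\},\{2,4\},\{1,5\}\}$ of size $3>2=\lfloor 5/2\rfloor$, so the paper's own ordering attains max border $\lfloor n/2\rfloor+1$ for odd $n$ and its proof of the upper bound is flawed as written (the proposition itself remains true, as your modified ordering shows). Your reversed tie-break is exactly what forces each level to be entered at its ``thin end,'' so that every step trades one border vertex for exactly one new forward neighbour.

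Two caveats. First, your prescription is self-contradictory at the central level $\sigma=n+1$ (both clauses apply there), and the choice matters: for $n=5$, consuming $\mathcal{D}_6$ in \emph{decreasing} order of $i$ produces the prefix $\{1,2\},\{1,3\},\{2,3\},\{1,4\},\{2,4\}$ with border $\{\{1,5\},\{2,5\},\{3,4\}\}$ of size $3$, whereas taking \emph{increasing} order at $\sigma=n+1$ keeps the max border at $2$; I have checked that with this resolution your scheme gives max border $\lfloor n/2\rfloor$ for $n=5,6,7$, so what remains is a clean general verification of your local border computation (and of prefix connectivity, which Theorem~\ref{theorem_minimax} implicitly requires via the definition of $\beta$). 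Second, on the lower bound you are more careful than the paper: the paper cites Proposition~\ref{proposition_tw_tree}, whose lower bound comes from Theorem~\ref{structural} and is only a $\Theta$-statement, whereas the exact value $\lfloor n/2\rfloor$ needs precisely the classical equality $\trw(P_a\,\square\,P_b)=\min(a,b)$ that you invoke; your split of $P_n$ into two disjoint subpaths together with Lemma~\ref{lem:cartesian} then finishes it. (Your alternative lower-bound sketch, that every ordering must cross the middle level expensively, is plausible but unproven; keep the grid route.) In short, your route is the paper's route done right, with the central-level tie-break being the one detail still to be pinned down.
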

\begin{proof}
	We begin by establishing the lower bound. Consider the subgraph $H$ of $F_2(P_n)$ induced by the vertex set 
	\[\left\{\{x_1,x_2\}\in V(F_2(P_n)):1\le x_1\le \left\lfloor \frac{n}{2}\right\rfloor <x_2\le n\right\}.\] It is straightforward to observe that $H$ is isomorphic to $P_{\left\lfloor \frac{n}{2}\right\rfloor}\square P_{\left\lceil \frac{n}{2}\right\rceil}$. By applying  Lemma~\ref{lem:cartesian} and  Theorem~\ref{structural}, we obtain the lower bound: \[\trw(F_2(P_n))\ge \left\lfloor \frac{n}{2}\right\rfloor.\]
	
	To show the upper bound, we define an ordering $\pi$
	on $V(F_2(P_n))$ and apply Theorem \ref{theorem_minimax}.
	For $3 \le i \le 2n-1$, let
	\[
	\mathcal{D}_i := \{\{x_1,x_2\}\in F_2(P_n) \colon x_1+x_2=i\};
	\]
	see Figure~\ref{fig:path-D} for an example. Note that $|\mathcal{D}_i|$ is
	maximized when $i=n+1$, and in such a case, we have
	$|\mathcal{D}_i|=\lfloor\frac{n}{2}\rfloor$.  
	For every pair of vertices $\{x_1,x_2\},\{y_1,y_2\}\in V(F_2(P_n))$ with
	$x_1 < x_2$ and $y_1 < y_2$, we say that
	\[
	\{x_1,x_2\}\le\{y_1,y_2\} \textrm{ whenever } x_1+x_2<y_1+y_2, \textrm{ or } 
	x_1+x_2=y_1+y_2 \textrm{ and  } x_1 <y_1.
	\]
	Let $\pi$ be the corresponding ordering of $V(F_2(P_n))$. Let  $1
	\le r \le \binom{n}{2}$ be an integer. Recall that $T_r$ is the set of the first $r$ vertices in the order $\pi$, i.e., $T_r=\{\pi(1),\dots,\pi(r)\}$. Note that the subgraph
	of $F_2(P_n)$ induced by $T_r$ is connected, and thus, $\beta(T_r)
	= |N(T_r) \setminus T_r|$. 
	
	Let $i$ be the maximum index such that $\D_1 \cup \dots \cup
	\D_i \subseteq T_r$. 
	\begin{itemize}
		\item If $\pi(r) \in \D_i$, then $\beta(T_r) = |N(T_r)\setminus T_r| =
		|\mathcal{D}_{i+1}|$ and $\beta(T_r) \le \lfloor \frac{n}{2} \rfloor$.
		
		\item Suppose that $\pi(r) \in \D_{i+1}$; thus, $\pi(r) = \{j,i+1-j\}$ for
		some $j$. If $i < n+1$, then $|(N(T_r)\setminus T_r) \cap \mathcal{D}_{i+2}| = j+1$
		and $|(N(T_r)\setminus T_r) \cap \mathcal{D}_{i+1}|=|\mathcal{D}_{i+1}|-j < \lfloor
		\frac{n}{2} \rfloor-j$. Thus, $\beta(T_r) \le \lfloor \frac{n}{2} \rfloor$.
		Suppose that $i \ge n+1$. We have that $|(N(T_r)\setminus T_r) \cap \mathcal{D}_{i+2}|
		= j$ and $|(N(T_r)\setminus T_r) \cap \mathcal{D}_{i+1}| = |\D_{i+1}|-j \le \lfloor
		\frac{n}{2} \rfloor - j$. Thus, $\beta(T_r) \le \lfloor \frac{n}{2}
		\rfloor$.
	\end{itemize}
	Therefore, \[\beta(T_r) \le \left \lfloor \frac{n}{2} \right  \rfloor.\]
	By Theorem \ref{theorem_minimax} we have that
	\[
	\trw(F_2(P_n)) \le MB_\pi(F_2(P_n)) = \max_{i \in [n]} \beta(T_i)= \left\lfloor
	\frac{n}{2} \right\rfloor.
	\]
\end{proof}

\begin{figure}[htb!]
	\centering
	\includegraphics[width=0.4\textwidth]{./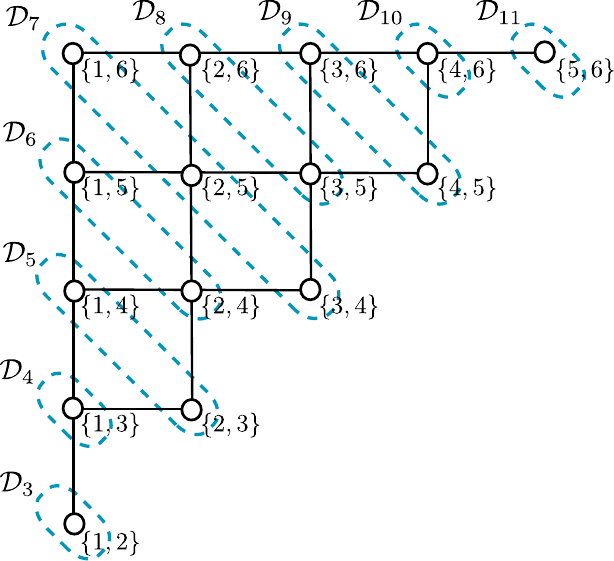}
	\caption{Subsets $\mathcal{D}_i$ in $F_2(P_6)$.}	
	\label{fig:path-D}
\end{figure}

\section{Treewidth of $F_{2}(K_{n})$}{\label{sec:f2kn}}

In this section, we aim to establish Theorem~\ref{thm_tw_2-johnson}. We first provide an upper bound by constructing a path decomposition $(P,\mathcal{V})$ of $F_2(K_n)$ whose
width matches the value stated in Theorem~\ref{thm_tw_2-johnson}. 
For the lower bound, we exhibit a bramble $\mathcal{B}$ of $F_2(K_n)$
whose order is exactly the value given in Theorem~\ref{thm_tw_2-johnson} plus one.  For the reader's convenience, we restate Theorem~\ref{thm_tw_2-johnson} below. 
\johnsontwo*

As mentioned in the Introduction, the treewidth of $F_2(K_n)$ was previously computed by Harvey and Wood~\cite{linegraph} for the line graph of  complete graphs, which is isomorphic to $F_2(K_n)$, and also corresponds to the Johnson graph $J(n,2)$.  In our proof of Theorem~\ref{thm_tw_2-johnson}, however,  we construct a bramble different from the one presented in~\cite{linegraph}, 
with the order of our bramble determined by a result from Faudree and Schelp on extremal graph theory. 

Throughout this section we assume that whenever $\{i,j\}$ is a vertex of $F_2(K_n)$,
we have that $i < j$. We construct the path decomposition $(P,\mathcal{V})$ of $F_2(K_n)$ as follows.  
Let $P$ be the path with vertex set $[n]$, where each vertex $l$ is adjacent to $l+1$ for $1 \le l < n$.
For every $1 \le l \le n$, let 
\[V_l:=\{\{i,j\} \in V(F_2(K_n)): i \le l \le j\}.\]
Let \[\mathcal{V}:=(V_l)_{l \in V(P)}.\]
We have the following.
\begin{enumerate}
	\item For every $\{i,j\}\in V(F_2(K_n))$, we have that $\{i,j\} \in V_j.$ Thus, \[V(F_2(K_n))=\bigcup_{l\in V(P)} V_l.\]
	
	\item If $\{i,j\}\{i',j'\}$ is an edge of $F_2(K_n)$, and  $l$ is the only element 
	in $\{i,j\} \cap \{i',j'\}$, then 
	\[\{i,j\},\{i',j'\}\in V_l.\]
	
	\item  For every $1 \le l_1 < l_3 \le n$ and  $l_1 \le l_2 \le l_3$, if $\{i,j\}$ is a vertex in $V_{l_1} \cap V_{l_3}$, then $i\le l_1<l_3\le j$, which implies $i\le l_2\le j$, and so $\{i,j\}\in V_{l_2}$. Thus, 
	\[V_{l_1} \cap V_{l_3} \subset V_{l_2}.\]
\end{enumerate}
Therefore, $(P,\mathcal{V})$ is a path decomposition of $F_2(K_n)$. 

For every $ 1 \le l \le n$, we have that
\[|V_l|=(l-1)(n-l)+n-1.\]
This is maximized for $l=\lfloor (n+1)/2 \rfloor$. Therefore,
\begin{equation} 
	\label{eq:f2kn}
	\trw(F_2(K_n)) \le \prw(F_2(K_n))\le\begin{cases}
		\tfrac{n}{2}\left(\tfrac{n}{2}-1\right)+n-2 & \text{when $n$ is
			even,}\\
		\left(\tfrac{n-1}{2}\right)^2+n-2 & \text{when $n$ is odd.}
	\end{cases}
\end{equation}

For the lower bound on $\trw(F_2(K_n))$ we make use of brambles and the following result on extremal
graph theory due to Faudree and Schelp~\cite{Faudree}. 
\begin{theorem}[\cite{Faudree}]\label{theorem_extremal}
	If $G$ is a graph with $|V(G)|=kt+r$, $0\le r< k$, containing no path with
	$k+1$ vertices, then $|E(G)|\le t\binom{k}{2}+\binom{r}{2}$.   
\end{theorem}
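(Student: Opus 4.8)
The plan is to prove this classical bound by induction on $n=|V(G)|$, making it self-contained. Write $f(n):=t\binom{k}{2}+\binom{r}{2}$ for the claimed maximum, where $n=kt+r$ with $0\le r<k$. A useful preliminary is the closed form $f(n)=\tfrac{1}{2}\big((k-1)n-r(k-r)\big)$, which exposes the structure of the two reductions below.

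\textbf{Reduction to connected graphs.} First I would check that $f$ is superadditive, i.e. $f(a)+f(b)\le f(a+b)$; since $(k-1)n/2$ is additive, this is equivalent to subadditivity modulo $k$ of the ``defect'' $g(n):=r(k-r)/2$, a concave function of the residue $r$ that vanishes at $r=0$, and amounts to a finite check over residues. Granting this, if $G$ is disconnected with components $G_1,\dots,G_m$ on $n_1,\dots,n_m$ vertices, each $G_i$ avoids the path on $k+1$ vertices and has fewer vertices, so $e(G_i)\le f(n_i)$ by induction and $e(G)=\sum_i e(G_i)\le\sum_i f(n_i)\le f(n)$. Hence it suffices to treat connected $G$, and the base case $n\le k$ is immediate from $e(G)\le\binom{n}{2}=f(n)$.

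\textbf{Main case (connected, $n>k$).} Take a longest path $P=x_0x_1\cdots x_p$; since $G$ has no path on $k+1$ vertices, $p\le k-1$, and since $n>k$ the path is non-spanning, so some vertex lies off $P$. The key structural ingredient is the endpoint bound $\deg(x_0)+\deg(x_p)\le p$: all neighbors of $x_0$ and of $x_p$ lie on $P$, and whenever $x_0\sim x_i$ and $x_p\sim x_{i-1}$ one obtains a spanning cycle of $V(P)$, which — together with a vertex off $P$ reached through the connected graph — yields a longer path, a contradiction. This already closes the induction when $k\mid n$: there $f(n)-f(n-1)=k-1$, and deleting the endpoint $x_0$ (of degree at most $p\le k-1$) loses at most $k-1$ edges, with $G-x_0$ handled by the induction hypothesis.

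\textbf{Main obstacle.} The genuine difficulty is the residue regime $r\neq0$: there the per-vertex budget $f(n)-f(n-1)=r-1$ is far smaller than the degrees one can guarantee to delete, so neither minimum-degree deletion nor endpoint deletion is lossless — for instance $2K_{k-1}$ (with residue $r=k-2$) already defeats the naive bookkeeping, since every deletion costs $k-2>r-1$ edges even though the graph lies below the bound. The crux is therefore to extract the clique structure of near-extremal configurations. I would do this with P\'osa-type rotations along $P$: fixing $x_p$ and rotating the other endpoint produces a large set $X$ of alternative endpoints, and non-extendability forces strong adjacency constraints among $X$ and its neighbors on $P$, ultimately exhibiting a clique $K_s$ (with $s\le k$) that can be split off so that it carries exactly $f(n)-f(n-s)$ edges. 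Making this peeling precise — choosing $s$ and the removed set so that the induction hypothesis applies to the remainder and the edge count is tight for \emph{every} residue — is the step I expect to be most delicate, and it is exactly where the refinement over the Erd\H{o}s--Gallai bound $\tfrac{(k-1)n}{2}$ is won.
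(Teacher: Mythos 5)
You are transparent that the hard case is not done, and the review must reflect that: this is a genuine gap, not a complete proof. What you do establish is sound — the closed form $f(n)=\tfrac12\bigl((k-1)n-r(k-r)\bigr)$ is right; superadditivity of $f$ does reduce to the residue check, which succeeds in both cases (for $r_a+r_b<k$ it is $2r_ar_b\ge 0$, and for $r_a+r_b\ge k$ it is $(k-r_a)(k-r_b)\ge 0$); the reduction to connected graphs, the base case $n\le k$, the Erd\H{o}s--Gallai endpoint bound $\deg(x_0)+\deg(x_p)\le p$ via the cycle-plus-outside-vertex argument, and the deletion step when $k\mid n$ are all correct. But the strong induction never closes: the case $r\ne 0$, where the per-vertex budget is $f(n)-f(n-1)=r-1$, is exactly where the theorem lives, and your treatment of it is a program, not a proof. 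Worse, even the case you call closed is conditional on the missing one, since applying the induction hypothesis to $G-x_0$ when $k\mid n$ invokes the full statement at $n-1$, whose residue is $k-1$.

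The proposed repair — P\'osa rotations producing a clique $K_s$ that is ``split off so that it carries exactly $f(n)-f(n-s)$ edges'' — faces a concrete obstruction you do not address. In a connected graph a clique cannot be split off cleanly: it meets the remainder in at least a cut vertex, so the bookkeeping must be done block-by-block rather than component-by-component, and the rotation argument does not by itself produce a clique end-block of the right size. Indeed, the connected near-extremal graphs for forbidding $P_{k+1}$ look like $K_{\lfloor (k-1)/2\rfloor}$ joined to a large independent set; these contain no clique end-block anywhere near $K_k$, so any ``peeling'' lemma must also handle the regime where rotations yield only a small dominating clique — which is essentially the content of Kopylov's theorem and of Faudree and Schelp's original case analysis, and is precisely what is absent here. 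For comparison: the paper offers no proof of this statement at all — it is quoted from Faudree and Schelp \cite{Faudree} and used as a black box — so the fair verdict is that you have correctly rebuilt the standard outer scaffolding (superadditivity, connectivity reduction, endpoint lemma) around a missing extremal core, rather than given an alternative proof.
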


\begin{proposition}
	\label{prop:brambles-2johnson}
	For $n\ge 4$ we have that
	\[
	\brn(F_2(K_n)) \ge
	\begin{cases}
		\tfrac{n}{2}\left(\tfrac{n}{2}-1\right)+n-1 & \text{when $n$ is even,}\\
		\left(\tfrac{n-1}{2}\right)^2+n-1 & \text{when $n$ is odd.}
	\end{cases}
	\]
\end{proposition}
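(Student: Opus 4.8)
The plan is to translate everything to the complete graph. Identify each vertex of $F_2(K_n)$ with the corresponding edge of $K_n$; then a set $\mathcal{X}$ of vertices of $F_2(K_n)$ induces a connected subgraph exactly when the associated edges form a connected subgraph of $K_n$ (since $F_2(K_n)$ is the line graph of $K_n$ and a line graph is connected iff its root is). Moreover two such sets \emph{touch} precisely when the vertex sets they cover in $K_n$ intersect, and a vertex set $S$ (i.e.\ an edge set of $K_n$) is a hitting set of a family $\mathcal{B}$ iff its complement $\overline{S}=E(K_n)\setminus S$ contains no member of $\mathcal{B}$. So to lower bound $\brn(F_2(K_n))$ via Theorem~\ref{thm:tw-bramble}, I want a pairwise-touching family of connected subgraphs of $K_n$ whose ``avoidance'' forces $\overline{S}$ to have few edges.

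The engine is the path bramble together with Faudree and Schelp. Let $m:=\lfloor n/2\rfloor+1$ and take $\mathcal{B}$ to be all paths of $K_n$ on $m$ vertices. Since $2m>n$, any two such paths share a vertex of $K_n$, so $\mathcal{B}$ is a bramble. A hitting set $S$ corresponds exactly to an $\overline{S}$ containing no path on $m$ vertices, so Theorem~\ref{theorem_extremal} (with $k=m-1$ and $n=(m-1)t+r$) gives $|\overline{S}|\le t\binom{m-1}{2}+\binom{r}{2}$, whence $\brn(F_2(K_n))\ge\binom{n}{2}-t\binom{m-1}{2}-\binom{r}{2}$. For odd $n$ this is the whole proof: here $m-1=\tfrac{n-1}{2}$, $t=2$, $r=1$, and the bound evaluates to $\bigl(\tfrac{n-1}{2}\bigr)^2+n-1$, matching the statement. (The leftover vertex $r=1$ in the extremal graph is exactly what produces the ``$+\,n-1$'' term.)

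For even $n$ the same computation only yields $\binom{n}{2}-2\binom{n/2}{2}=n^2/4$, which falls short of the target by $n/2-1$; the obstruction is that the extremal complement $K_{n/2}\cup K_{n/2}$ spends all $n$ vertices on two equal cliques. The target value equals $\binom{n}{2}-(n/2-1)^2=\binom{n}{2}-\bigl(\binom{n/2}{2}+\binom{n/2-1}{2}\bigr)$, i.e.\ it is precisely $\binom{n}{2}$ minus the Faudree--Schelp bound for a $P_{n/2+1}$-free graph on $n-1$ vertices. This suggests augmenting $\mathcal{B}$ so that some fixed vertex, say $1$, is forced to be (almost) isolated in $\overline{S}$: add to $\mathcal{B}$ the spanning connected subgraphs of the triples $\{1,x,y\}$. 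Avoiding these forces $\deg_{\overline{S}}(1)\le 1$ (two edges $\{1,x\},\{1,y\}$ in $\overline{S}$ would realise the path $x\!-\!1\!-\!y$) and makes the unique possible neighbour of $1$ a pendant, so that $\overline{S}$ behaves like a $P_{n/2+1}$-free graph on $n-1$ vertices and Faudree--Schelp now caps it at $(n/2-1)^2$. All these gadgets contain vertex $1$, so they touch one another automatically.

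The hard part is the \emph{validity} of the augmented family for large $n$: a triple-gadget covers only $3$ vertices, while each path covers $n/2+1$, and once $n\ge 8$ a path may live on $[n]\setminus\{1,x,y\}$ and hence fail to touch the gadget. Reconciling ``force a near-isolated vertex'' (which requires forbidding the small cherries $\{1,x\},\{1,y\}$, an inherently local condition) with ``pairwise touching'' (which wants every element to cover more than half the vertices) is the crux; any attempt to enlarge the gadgets reintroduces cheap hitting edges, and restricting the paths to pass through vertex $1$ trivialises them. I therefore expect the final even-$n$ bramble to interpolate carefully between long paths and a concentrated star/clique at a single vertex, and the main technical step will be proving simultaneously that this family is pairwise touching and that its minimum hitting set is exactly $\binom{n}{2}-(n/2-1)^2$; the odd case, by contrast, falls out immediately from the plain path bramble.
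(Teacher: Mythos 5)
Your odd-$n$ argument is correct and is exactly the paper's: the bramble of all (edge sets of) paths of $K_n$ on $\tfrac{n+1}{2}$ vertices, pairwise touching because any two of them cover more than $n$ vertices of $K_n$ in total, combined with Theorem~\ref{theorem_extremal} (with $k=\tfrac{n-1}{2}$, $t=2$, $r=1$) to bound the complement of a hitting set, yielding $\left(\tfrac{n-1}{2}\right)^2+n-1$. The even case, however, is a genuine gap, and you have diagnosed it yourself: you never produce a bramble attaining $\tfrac{n}{2}\left(\tfrac{n}{2}-1\right)+n-1$. The triple gadgets on $\{1,x,y\}$ fail the touching requirement --- for $n\ge 8$ a path on $\tfrac{n}{2}+1$ vertices can live entirely inside $[n]\setminus\{1,x,y\}$ --- so the augmented family is not a bramble, and its minimum hitting set says nothing about $\brn(F_2(K_n))$ via Theorem~\ref{thm:tw-bramble}, which requires pairwise touching. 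Your closing remarks (``I expect\ldots'', ``the main technical step will be\ldots'') are a plan, not a proof; as written, your even case establishes only $\brn(F_2(K_n))\ge \tfrac{n^2}{4}$, which is short of the claim by $\tfrac{n}{2}-1$.

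The paper resolves precisely the tension you identified, but with the opposite trade: instead of making the concentrated subfamily small (triples at a fixed vertex), it makes each of its members large enough to touch everything, and instead shrinks the paths. Fix the vertex $n$ and split $V(F_2(K_n))$ into $V_1$ (pairs avoiding $n$) and $V_2$ (pairs containing $n$). Take $\mathcal{B}_1$ to be the odd-case path bramble inside $F_2(K_{n-1})$, i.e., paths on $\tfrac{n}{2}$ of the vertices $[n-1]$, and take $\mathcal{B}_2$ to be \emph{all} $\tfrac{n}{2}$-subsets of $V_2$. Each member of $\mathcal{B}_2$ covers $\tfrac{n}{2}+1$ vertices of $K_n$, so its covered set meets that of every member of $\mathcal{B}_1$ (since $\tfrac{n}{2}+\tfrac{n}{2}>n-1$), which gives touching via a shared endpoint, exactly as in your line-graph dictionary; two members of $\mathcal{B}_2$ touch trivially since all their edges contain $n$. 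Because $\mathcal{B}_1$ and $\mathcal{B}_2$ live on the disjoint parts $V_1$ and $V_2$, any hitting set splits as $S=S_1\cup S_2$ with the two costs adding: $|S_1|\ge\left(\tfrac{n-2}{2}\right)^2+n-2$ by the odd bound applied at $n-1$, and $|S_2|\ge\tfrac{n}{2}$, since fewer than $\tfrac{n}{2}$ edges at $n$ leave at least $\tfrac{n}{2}$ vertices of $[n-1]$ untouched and hence an unhit member of $\mathcal{B}_2$. The sum is $\tfrac{n}{2}\left(\tfrac{n}{2}-1\right)+n-1$. Note the mechanism: the star at a vertex is not used to force a near-isolated vertex in the complement of $S$, as you attempted; it is a separate bramble on its own vertex class whose hitting cost of $\tfrac{n}{2}$ is accounted additively, which sidesteps entirely the local-versus-global conflict that blocked your gadget construction.
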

\begin{proof}
	\ 
	\begin{itemize} 
		\item Suppose that $n$ is odd. Note that the vertices of $F_2(K_n)$ are precisely the edges
		of $K_n$.
		Let 
		\[
		\mathcal{B}:=\left \{B \subset V(F_2(K_n)): B \textrm{ induces a path of  } K_n \textrm{ with }  \frac{n+1}{2} \textrm{ vertices}\right \}.
		\]
		Note that every element of $\mathcal{B}$ induces a connected subgraph of $F_2(K_n)$. 
		Let $B_1, B_2 \in \mathcal{B}$. Since, $ \left | \bigcup_{A \in B_1} A \right |= \left | \bigcup_{A \in B_2} A \right |=\frac{n+1}{2}$,
		we have that $\left (\bigcup_{A \in B_1} A \right )   \cap \left (\bigcup_{A \in B_2}  A \right ) \neq \emptyset$.
		Let $a \in \left (\bigcup_{A \in B_1} A \right )   \cap \left (\bigcup_{A \in B_2}  A \right )$, and let $b_1,b_2 \in V(K_n)$
		such that $\{a,b_1\} \in B_1$ and $\{a,b_2\} \in B_2$. If $b_1=b_2$, then $B_1$ and $B_2$ have a common vertex. If
		$b_1\neq b_2$, then the edge $\{a,b_1\}\{a,b_2\}$  belongs to $F_2(K_n)$ and connects a vertex in $B_1$ to a vertex in $B_2$. Therefore, $B_1$ touches $B_2$, and
		$\mathcal{B}$ is a bramble. 
		
		Let $S$ be a hitting set of $\mathcal{B}$ of minimum cardinality, and let $S^c$ be its
		complement in $V(F_2(K_n))$.  Let $H$ be the subgraph of $K_n$ induced by $S^c$. 
		If $H$ contains a path $P$ with $\tfrac{n+1}{2}$ vertices, then we have a contradiction, since $E(P) \in \mathcal{B}$
		and $E(P) \cap S=\emptyset$.
		Thus, $H$
		contains no path with $\tfrac{n+1}{2}$ vertices. By Theorem
		\ref{theorem_extremal}, with $G=H$,  $k=\tfrac{n-1}{2}$, $t=2$ and
		$r=1$, we have that \[|S^c|=|E(H)|\le 2\binom{\tfrac{n-1}{2}}{2}. \] This
		implies that
		\[
		\brn(F_2(K_n))\ge |S| = \binom{n}{2} - |S^c| \ge \binom{n}{2}-2
		\binom{\tfrac{n-1}{2}}{2} = \left(\tfrac{n-1}{2}\right)^2+n-1,
		\]
		as required. 
		
		\item Suppose that $n$ is even. Let 
		\[V_1:=\{A \in V(F_2(K_n)): n \notin A\} \textrm{ and } V_2:=\{A \in V(F_2(K_n)): n \in A\}.\]
		Let $H_1$ and $H_2$ be the subgraphs of $F_2(K_n)$ induced by $V_1$ and $V_2$, respectively.
		Note that   $H_1 \cong F_2(K_{n-1})$ and $H_2 \cong F_1(K_{n-1}) \cong K_{n-1}$.
        
		Let $\mathcal{B}_1$ be a bramble of $H_1$ defined as in the previous case, and 
		define \[ \mathcal{B}_2:=\left \{B \subset V_2: |B|=\frac{n}{2}  \right \}.\]
		Note that $\mathcal{B}_2$ is a bramble in $H_2$, since $H_2$ is a complete graph.
		Define $\mathcal{B}:=\mathcal{B}_1 \cup \mathcal{B}_2$. 
		Let $B_1,B_2 \in \mathcal{B}$. Since both $\mathcal{B}_1$ and $\mathcal{B}_2$ are
		brambles, the subgraphs induced by $B_1$ and $B_2$ are connected. If
		$B_1$ and $B_2$ are both  in $\mathcal{B}_1$, or are both in  $\mathcal{B}_2$,
		then they touch each other. Now suppose that $B_1 \in \mathcal{B}_1$ and  $B_2 \in \mathcal{B}_2$.
		Note that   $|\bigcup_{A \in B_1} A|=\frac{n}{2}$ and  $|\bigcup_{A \in B_2} A|=\frac{n}{2}+1$.
		Therefore, $\left (\bigcup_{A \in B_1} A \right )   \cap \left (\bigcup_{A \in B_2}  A \right ) \neq \emptyset$.
		Let $a \in \left (\bigcup_{A \in B_1} A \right )   \cap \left (\bigcup_{A \in B_2}  A \right )$, and let $b_1 \in V(K_n)$
		such that $\{a,b_1\} \in B_1$. Since all vertices in $B_2$ contain $n$, we have that $\{a, n\} \in B_2$. Then, the edge $\{a, b_1\}\{a, n\}$ belongs to $F_2(K_n)$ and connects a vertex of $B_1$ to a vertex of $B_2$. Thus, $B_1$ and $B_2$ touch. Therefore, $\mathcal{B}$ is a bramble for $F_2(K_n)$. 
		
		Let $S$ be a hitting set of $\mathcal{B}$ of minimum cardinality; let $S_1:=S \cap V_1$ and $S_2:=S\cap V_2$.
		Note that $S_1$ is a hitting set of $\mathcal{B}_1$ and $S_2$ is a hitting set of $\mathcal{B}_2$. Therefore, 
		\begin{equation}
			\label{eq:first-bramble}
			|S_1| \ge (\tfrac{n-2}{2})^2+n-2.
		\end{equation}
		Suppose that $|S_2| < \frac{n}{2}$. Since every element of $S_2$ contains
		$n$, we have that \[\left | \left (\bigcup_{A \in S_2} A \right ) \setminus \{n\} \right |<\frac{n}{2}.\]
		Thus, there exists a set $B '\subset V(K_n)\setminus \left ( \left (\bigcup_{A \in S_2} A \right ) \setminus \{n\} \right )$
		such that $|B'|=\frac{n}{2}$. Let $B:=\{\{a,n\}:a \in B'\}.$ We have a contradiction since $B \in \mathcal{B}_2$ and $S_2 \cap B =\emptyset$.
		Therefore,
		\begin{equation}
			\label{eq:second-bramble}
			|S_2|\ge \frac{n}{2}.
		\end{equation}
		Combining (\ref{eq:first-bramble})~and~(\ref{eq:second-bramble}),
		we have that
		\[
		\brn(F_2(K_n)) \ge |S| = |S_1| + |S_2|
		\ge (\tfrac{n-2}{2})^2 + n-2 + \tfrac{n}{2} = \tfrac{n}{2}
		(\tfrac{n}{2}-1)+ n-1,
		\]
		as required. 
	\end{itemize}
\end{proof}

Since $\trw(F_2(K_n))=\brn(F_2(K_n))-1$ by Theorem~\ref{thm:bramble}, we derive Theorem~\ref{thm_tw_2-johnson} as a consequence of \eqref{eq:f2kn} and Proposition~\ref{prop:brambles-2johnson}.

\section{Treewidth of $F_{k}(K_{n})$}
\label{sec:fkkn}

In this section, we establish an asymptotic lower bound for $\trw(F_k(K_n))$. Additionally, we generalize the upper bound for $\trw(F_k(K_n))$ presented in Section~\ref{sec:f2kn}, extending it from the case $k=2$ to arbitrary values of $k$.

\begin{proposition}\label{lower_bound_kn}	
	If  $1 \le k \le n-1$ is a fixed integer, then 
	\begin{equation*}
		\trw(F_{k}(K_{n}))\geq \frac{1}{12k(k!)}n^{k}+\Theta(n^{k-1})
	\end{equation*}
\end{proposition}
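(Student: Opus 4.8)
The plan is to mimic the lower-bound argument already used for stars in Theorem~\ref{treewidth_star}, but now exploiting that $\lambda_2(K_n)=n$ rather than $\lambda_2(S_n)=1$. The two ingredients are the Aldous/Caputo--Liggett--Richthammer equality $\lambda_2(F_k(K_n))=\lambda_2(K_n)$ from~(\ref{eq_algconn}), and the Chandran--Subramanian bound of Theorem~\ref{chandran}. So first I would record the three quantities that feed into Theorem~\ref{chandran}: the number of vertices $|V(F_k(K_n))|=\binom{n}{k}$, the algebraic connectivity $\lambda_2(F_k(K_n))=\lambda_2(K_n)=n$, and the maximum degree $\Delta$ of $F_k(K_n)$.

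Next I would compute $\Delta$. In the Johnson graph $J(n,k)\cong F_k(K_n)$ every $k$-subset $A$ is adjacent to exactly those subsets obtained by swapping one element of $A$ for one element outside $A$, so the graph is regular of degree $k(n-k)$. Thus $\Delta=k(n-k)$. Plugging these into Theorem~\ref{chandran} gives
\begin{align*}
  \trw(F_k(K_n)) &\ge \frac{|V(F_k(K_n))|}{12\Delta}\,\lambda_2(F_k(K_n)) - 1\\
  &= \frac{\binom{n}{k}}{12\,k(n-k)}\cdot n - 1.
\end{align*}

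The remaining work is purely asymptotic bookkeeping. I would write $\binom{n}{k}=\frac{n^k}{k!}+\Theta(n^{k-1})$ and factor the leading behaviour of $\frac{n}{k(n-k)}$. Since $\frac{n}{n-k}=1+\frac{k}{n-k}=1+\Theta(n^{-1})$, the product $\binom{n}{k}\cdot\frac{n}{n-k}$ equals $\frac{n^k}{k!}+\Theta(n^{k-1})$, and dividing by the constant $12k$ yields
\[
  \trw(F_k(K_n)) \ge \frac{1}{12k(k!)}\,n^k + \Theta(n^{k-1}),
\]
which is exactly the claimed bound (the additive $-1$ is absorbed into the $\Theta(n^{k-1})$ term, as is the lower-order part of each factor). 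This matches the structure of the star computation verbatim, with $\lambda_2=n$ replacing $\lambda_2=1$ and $\Delta=k(n-k)$ replacing $\Delta=n-(k-1)$.

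I do not anticipate a genuine obstacle here, since every hard input is quoted from earlier in the paper; the only place to be careful is the degree count and the asymptotic manipulation, making sure that the factor-of-$n$ coming from $\lambda_2(K_n)=n$ (as opposed to the constant $1$ in the star case) is what upgrades the exponent from $n^{k-1}$ to $n^k$. It is worth double-checking that $F_k(K_n)$ is connected (so that Theorem~\ref{chandran} applies), which holds since $K_n$ is connected for $n\ge 2$, and that $\lambda_2(K_n)=n$ is the standard Laplacian spectrum of the complete graph.
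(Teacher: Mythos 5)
Your proposal is correct and follows essentially the same route as the paper's own proof: both apply Theorem~\ref{chandran} with $\lambda_2(F_k(K_n))=\lambda_2(K_n)=n$ from~(\ref{eq_algconn}), the degree $\Delta=k(n-k)$ of the Johnson graph, and $|V(F_k(K_n))|=\binom{n}{k}$, followed by the same asymptotic simplification of $\frac{n}{12k(n-k)}\binom{n}{k}-1$. Your extra checks (regularity of $J(n,k)$, connectedness so Theorem~\ref{chandran} applies) are sound and only make the argument more careful than the paper's.
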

\begin{proof}
	The graph $F_k(K_n)$ is $k(n-k)$-regular (see~\cite{barik}), so $\Delta(F_k(K_n)) = k(n-k)$. It is well known 
	that $\lambda_{2}(K_{n}) = n$ (see \cite{oldandnew}). Then, from equality
	\eqref{eq_algconn} and Theorem \ref{chandran}, we have     
	\begin{align*}
		\trw(F_{k}(K_{n})) & \ge  \frac{n}{12k(n-k)}\binom{n}{k}-1 \\ 
		&= \left( \frac{n}{12k(n-k)} \right) \left( \frac{n^{k}}{k!}+\Theta(n^{k-1})
		\right)\\
		&= \frac{n^{k}}{12k(k!)}\left(1+\frac{k}{n-k}\right)+\Theta(n^{k-1})\\
		&= \frac{1}{12k(k!)} \cdot n^{k}+\Theta(n^{k-1}).
	\end{align*}
	
\end{proof}

As a direct consequence of Proposition~\ref{lower_bound_kn}, and considering that $\trw(F_k(K_n))$ is trivially bounded above by the number of vertices of $F_k(K_n)$ minus $1$, which is $\binom{n}{k}-1$, we obtain that for a fixed integer $k$, $\trw(F_k(K_n)) = \Theta(n^k)$. In the remainder of this section, however, our aim is to establish a considerably sharper upper bound on $\trw(F_k(K_n))$, as presented in Theorem~\ref{thm_tw_k-johnson}. Notably, this upper bound is exact for $k=2$, and, as shown by computational experimentation, for certain small values of $n$ when $k=3$.

For the remainder of this section, we assume that $2\le k\leq \frac{n}{2}$, as $F_1(K_n)\cong K_n$ and $F_k(K_n)\cong F_{n-k}(K_n)$. 
To obtain the upper bound on $\trw(F_k(K_n))$, we generalize the path decomposition of $F_2(K_n)$ given in
Section~\ref{sec:f2kn}. 

Before proceeding, we give some notation
used throughout this section. 
When considering an $r$-subset $\{a_1,\dots,a_r\}$ with $r\ge 2$, we assume that $a_1<\dots <a_r$. 
Given two $r$-subsets $X=\{x_1,\dots,x_r\}$ and $Y=\{y_1,\dots,y_r\}$, we write $X\le Y$ to mean that
$(x_1,\dots,x_r)\le (y_1,\dots,y_r)$ in the lexicographic order.   
For every $A:=\{a_1,\dots,a_k\} \in \binom{[n]}{k}$, let $A_s=\{a_1,\dots,a_{k-1}\}$ and 
$A_t=\{a_2,\dots,a_k\}$; thus, $A_s,A_t \in \binom{[n]}{k-1}$.

We now define a path decomposition $(P, \mathcal{V})$ of $F_k(K_n)$ as follows:
\begin{itemize} 
	\item Let $P$ be the path with $V(P):=\{X \in \binom{[n-1]}{k-1}\}$, where two subsets $X$ and $Y$ are adjacent if they are consecutive in the lexicographic order of $\binom{[n-1]}{k-1}$. 
	\item For every $X \in \binom{[n]}{k-1}$, let
\[V_X:=\left \{A \in \binom{[n]}{k}: A_s \le X \le A_t\right \}.\] Define $\mathcal{V}:=(V_X)_{X \in V(P)}$. 
\end{itemize}

\begin{theorem}
	$(P,\mathcal{V})$ is a path decomposition of $F_{k}(K_n)$.
\end{theorem}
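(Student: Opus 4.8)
The plan is to verify the three defining conditions of a path decomposition directly, mirroring the structure used for $F_2(K_n)$ in Section~\ref{sec:f2kn}. The indexing set $V(P)$ consists of those $X \in \binom{[n]}{k-1}$ with $n \notin X$, ordered lexicographically into a path, and each bag is $V_X = \{A \in \binom{[n]}{k} : A_s \le X \le A_t\}$. I would first establish the covering condition: given any $A = \{a_1,\dots,a_k\}$, I need to produce some $X \in V(P)$ with $A_s \le X \le A_t$. The natural candidate is $X = A_s = \{a_1,\dots,a_{k-1}\}$, since $A_s \le A_s \le A_t$ holds trivially (the second inequality because the first $k-2$ entries agree and $a_{k-1} \le a_k$). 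I must only check $n \notin A_s$; since $A_s$ omits the largest element $a_k$, it contains $n$ only if $a_{k-1}=n$, forcing $a_k > n$, which is impossible. So $A_s \in V(P)$ and $A \in V_{A_s}$.

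\emph{For the edge condition}, recall that an edge of $F_k(K_n)$ joins $A$ and $A'$ whose symmetric difference is a single pair, i.e.\ $|A \cap A'| = k-1$. Writing $C = A \cap A'$, so $A = C \cup \{p\}$ and $A' = C \cup \{q\}$ with $p \neq q$, I would show both $A$ and $A'$ lie in a common bag $V_X$. The candidate is $X = C$ whenever $C$ is a valid index; since $C$ is a $(k-1)$-set, I need $n \notin C$ and $C_s \le C \le C_t$ — but here $C$ must be compared against $A_s, A_t$ and $A'_s, A'_t$, not its own slices. The cleaner route is to pick $X = \min\{A_s, A'_s\}$ or an explicit witness and verify $A_s \le X \le A_t$ and $A'_s \le X \le A'_t$ simultaneously; I expect the element $q$ or $p$ that is removed to land in a controllable slot, so that $X = C$ (when $n\notin C$) serves, with a small separate argument when $n$ is the larger of $p,q$.

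\emph{The connectivity (third) condition} is the heart of the matter and where I expect the main obstacle. I must show that for each vertex $A$ of $F_k(K_n)$, the set of indices $X$ with $A \in V_X$ forms a contiguous interval of the lexicographic path. Equivalently, $\{X : A_s \le X \le A_t,\ n \notin X\}$ is an interval in the induced order on $V(P)$ — this is immediate from the definition as a ``sandwich'' between $A_s$ and $A_t$, \emph{provided} that deleting the $X$ containing $n$ does not disconnect the interval. Since the only $X$ in $[A_s, A_t]$ containing $n$ would be those with $n$ as an entry, and $A_t$ is the lexicographic top of the interval, I would argue that all such $X$ sit at the top end, so their removal leaves a (possibly shorter) initial interval still contiguous. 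The delicate point is confirming that the lexicographic order restricted to $(k-1)$-subsets behaves well under the slicing maps $A \mapsto A_s, A_t$, and that no ``gaps'' appear.

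\textbf{Main obstacle.} The crux is the interval/contiguity argument for condition~3: I must carefully handle the lexicographic comparisons of $(k-1)$-subsets and confirm that $\{X \in V(P) : A_s \le X \le A_t\}$ is genuinely an unbroken subpath after excising indices containing $n$. I would isolate this as a short lemma about lexicographic intervals and the $A_s,A_t$ sandwich, then assemble conditions~1–3 to conclude $(P,\mathcal{V})$ is a path decomposition.
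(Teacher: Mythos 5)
Your overall skeleton (verify the three conditions directly) matches the paper's, and your condition~1 argument is correct and even slightly cleaner than the paper's: since $A_s$ omits $\max A$, one always has $n \notin A_s$, so $A_s \in V(P)$ and $A \in V_{A_s}$. But your treatment of the edge condition has a genuine gap, and you have misdiagnosed which case is hard. Writing $C = A \cap B$, $A = C \cup \{p\}$, $B = C \cup \{q\}$: when $n$ \emph{is} one of $p,q$, then $n \notin C$, so $C \in V(P)$ and $V_C$ contains both endpoints with no extra work (for any $a \in A$ one has $A_s \le A\setminus\{a\} \le A_t$, since removing a smaller element yields a lexicographically larger $(k-1)$-set). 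The problematic case — for which your sketch supplies no argument — is $n \in A \cap B$: then $C \notin V(P)$, and you must exhibit some other common bag. Your fallback candidate $\min\{A_s,A'_s\}$ points the wrong way (membership of $A$ in $V_X$ requires $X \ge A_s$, so a minimum cannot work; you would need a maximum, and then still verify it lies below both $A_t$ and $B_t$ and avoids $n$). The paper closes exactly this hole with its key observation: if $n \in X$ and $Y$ is the lexicographic predecessor of $X$ in $\binom{[n]}{k-1}$, then $V_X \subseteq V_Y$ — because $A \in V_X$ and $n \notin A_s$ force the strict inequality $A_s < X$, hence $A_s \le Y \le A_t$. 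Iterating, every bag indexed by a set containing $n$ is absorbed into the nearest preceding index lying in $V(P)$, and applying this to $V_C$ produces the required common bag for the edge $AB$ (and also covers the paper's route to condition~1 via $A_t$, which may contain $n$). A direct witness is also possible — with $p < q$ and $n \in C$ one can check $A_s < B_s < C \le A_t$ and $n \notin B_s$, so $V_{B_s}$ works — but some such argument must actually be supplied; as written, your proof does not go through.

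Conversely, what you flag as ``the heart of the matter'' (condition~3) is not an obstacle at all, and your sketched resolution is false as stated: the $(k-1)$-sets containing $n$ do \emph{not} all sit at the top of a lexicographic interval — they are interspersed throughout the order (for $k-1=2$ and $n=5$, the sets $\{1,5\},\{2,5\},\{3,5\}$ each sit strictly between sets avoiding $5$). Fortunately nothing delicate is needed: by definition $A \in V_X$ if and only if $A_s \le X \le A_t$, so the set of indices in $V(P)$ whose bags contain $A$ is exactly $\{X : A_s \le X \le A_t\} \cap V(P)$, and the intersection of an order interval with any subset of a totally ordered set is an interval of the induced order. Since consecutive vertices of $P$ are by construction consecutive in the induced lexicographic order on $V(P)$, this set is automatically a subpath; excising the indices containing $n$ cannot create gaps. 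So conditions~1 and~3 are sound (the latter for a simpler reason than you anticipated), but condition~2 in the case $n \in A \cap B$ is a genuine missing step, and it is precisely the containment chain $V_{X_m} \subseteq \cdots \subseteq V_{X_1}$ that the paper's proof is built around.
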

\begin{proof}
	Let $X \in \binom{[n]}{{k-1}}$ with $n \in X$, and let $Y$ be its predecessor. 
	Let $A \in V_X$. Since $Y \le X$ and $A_s \le X \le A_t$, we have
	that $Y \le A_t$. Given that $n \in X$ and $n \notin A_s$, we have that $A_s < X$.
	The fact that $Y$ is the predecessor of $X$ implies that $A_s \le Y$; thus,
	$A \in V_Y$. Therefore, if $X_1,\dots,X_m=X$ is the sequence of consecutive elements in $\binom{[n]}{k-1}$ in lexicographic order
	such that $n \notin X_1$ and $n \in X_i$ for all $2 \le i \le m$, then 
	\begin{equation} \label{eq:con}
		V_{X}=V_{X_m} \subseteq V_{X_{m-1}}\subseteq \cdots \subseteq V_{X_{1}}.
	\end{equation}
	
	For every $A \in \binom{[n]}{k}$, we have that $A \in V_{A_s}$ and $A_s\in V(P)$.
	Thus, %by (\ref{eq:con})
	\[V(F_k(G))= \bigcup_{X \in \binom{[n-1]}{k-1}}V_X=\bigcup_{X \in V(P)} V_X.\]
	
	Let $AB$ be an edge of $F_k(G)$. Note that $A\cap B \in \binom{[n]}{k-1}$ and
	$A,B \in V_{A\cap B}$. Thus, by (\ref{eq:con}), 
	\[A,B \in V_X \textrm{ for some vertex } X\in V(P).\]
	
	Let $A \in V(F_k(G))$. Let $X_1,\dots,X_m$ be the sequence
	of consecutive elements in $\binom{[n]}{k-1}$ in lexicographic order such that
	$X_1=A_s$ and $X_m=A_t$. Note that $A \in V_{X_i}$ for all $1 \le i \le m$ and
	these are all the $X \in \binom{[n]}{k-1}$ such that $A \in V_X$. 
	Let $\{i_1,\dots,i_p\}\subseteq [m]$ such that $X_{i_j}\in V(P)$ for each $j\in [p]$, meaning that $n\notin X_{i_j}$. 
	Therefore,  
	the set of vertices $X \in V(P)$ such that $A \in V_X$, which is precisely $\{X_{i_1},\dots ,X_{i_p}\}$, induces a subpath
	of $P$ (since they are consecutive elements of $\binom{[n-1]}{k-1}$ in lexicographic order). 
	This implies that if $Z$ is in the path from $X$ to $Y$ in $P$, then
	\[V_X \cap V_Y \subset V_Z.\]
	Therefore, $(P,\mathcal{V})$ is a path decomposition of $F_{k}(K_n)$.
\end{proof}

We now give the exact cardinality of a bag $V_X \in \mathcal{V}$. 
\begin{proposition} 
	\label{prop:size-of-bags}
	If $X=\{x_1,\dots,x_{k-1}\} \in V(P)$, then 
	\begin{equation*}
		|V_X|=x_1\sum_{i=1}^{k}\binom{n-x_i}{k-i}-\sum_{i=1}^{k-1}\binom{n-x_{i+1}+1}{k-i},
	\end{equation*}
	where for convenience we take $x_k=n$. 
\end{proposition}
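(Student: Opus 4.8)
The plan is to count the $k$-subsets $A=\{a_1<\dots<a_k\}$ lying in $V_X$ by splitting according to how $A$ compares with $X=\{x_1<\dots<x_{k-1}\}$ at its first coordinate, and then to evaluate the resulting sums with the hockey-stick identity. The two governing conditions are $A_s\le X$ and $X\le A_t$, and the observation that makes the count tractable is that at the first coordinate these conditions become \emph{one-sided}: if $a_1<x_1$ then $A_s\le X$ holds automatically (the first coordinate already decides the lexicographic comparison), so only $X\le A_t$ is binding; symmetrically, if $a_1=x_1$ then $x_1<a_2$ forces $X<A_t$, so only $A_s\le X$ is binding; and $a_1>x_1$ is impossible. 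I would record these three facts first, as they drive the whole argument.

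For the branch $a_1<x_1$, I would show that once $X\le A_t$ holds the inequality $a_2>a_1$ is automatic (since $X\le A_t$ gives $a_2\ge x_1>a_1$), so $a_1$ and $A_t$ may be chosen independently. Hence this branch contributes $(x_1-1)\,N(X)$, where $N(X):=\lvert\{Y\in\binom{[n]}{k-1}:X\le Y\}\rvert$ is the number of $(k-1)$-subsets lexicographically at least $X$. Counting $Y$ by the first coordinate at which $Y$ exceeds $X$ gives $N(X)=\sum_{i=1}^{k}\binom{n-x_i}{k-i}$ with the convention $x_k=n$ (the term $i=k$ being the single subset $Y=X$).

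For the branch $a_1=x_1$, the condition reduces to $(a_2,\dots,a_{k-1})\le(x_2,\dots,x_{k-1})$ with $a_k$ unconstrained beyond $a_{k-1}<a_k\le n$. I would partition these $A$ by the first index $q\in\{2,\dots,k-1\}$ at which $a_q<x_q$ (with $a_i=x_i$ for $i<q$ and a free increasing tail $a_{q+1}<\dots<a_k$), together with the remaining case $a_i=x_i$ for all $i\le k-1$ and $a_k$ free. Summing $\binom{n-a_q}{k-q}$ over $a_q\in\{x_{q-1}+1,\dots,x_q-1\}$ via the hockey-stick identity collapses each class to $\binom{n-x_{q-1}}{k-q+1}-\binom{n-x_q+1}{k-q+1}$; after re-indexing and absorbing the last case $n-x_{k-1}$, this branch contributes $\sum_{i=1}^{k-1}\binom{n-x_i}{k-i}-\sum_{q=2}^{k-1}\binom{n-x_q+1}{k-q+1}$.

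Adding the two branches and using $N(X)=\sum_{i=1}^{k-1}\binom{n-x_i}{k-i}+1$ yields $\lvert V_X\rvert=x_1 N(X)-1-\sum_{q=2}^{k-1}\binom{n-x_q+1}{k-q+1}$, which matches the claimed formula once the $i=k$ term $\binom{n-x_k+1}{1}=1$ is reincorporated into the second sum. The bulk of the work, and the main place to be careful, is the bookkeeping in the two-sided lexicographic reduction and the index shifts in the hockey-stick sums (including the degenerate cases $x_q=x_{q-1}+1$, where the empty range correctly matches the vanishing binomial difference); the conceptual crux is the partition of $V_X$ into the single branch where $X\le A_t$ is binding and the prefix-matching branch where $A_s\le X$ is binding.
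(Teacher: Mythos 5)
Your proposal is correct and follows essentially the same route as the paper's proof: both split on $a_1<x_1$ (where only $X\le A_t$ is binding, counted by the first coordinate where $A_t$ exceeds $X$) versus $a_1=x_1$ (where only $A_s\le X$ is binding, counted by the first coordinate dropping below $X$ and collapsed via the hockey-stick identity). The only difference is cosmetic bookkeeping — the paper folds the free-$a_k$ case into an $i=k$ term using the convention $x_k=n$ from the start, while you absorb it at the end — and your verification of the degenerate case $x_q=x_{q-1}+1$ matches the paper's remark.
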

\begin{proof}
	Let $A:=\{a_1,a_2,\dots,a_k\} \in V(F_k(K_n))$. Our objective is to determine the number of possibilities for the
	$a_i$'s so that $A \in V_X$. Note that $a_1 \le x_1$ as otherwise $A_s > X$, and $A$ would not be in $V_X$.
	Therefore, $a_1 < x_1$ or $a_1=x_1$.
	\begin{itemize}
		\item Suppose that $a_1<x_1$;  thus, $A_s \le X$ in this case. It remains to establish the 
		conditions so that $X\le A_t$. Note that $X \le A_t$ if and only if either $a_i=x_{i-1}$ for all $2 \le i \le k$, or
		for some $2 \le i \le k$, we have that
		\[a_j=x_{j-1} \textrm{ for all } 2 \le j < i, \textrm{ and } a_i > x_{i-1}.\]
		In the latter case there are $\binom{n-x_{i-1}}{k-(i-1)}$ choices for the remaining
		$\{a_{i},\dots,a_k\}$. We take $\binom{n-x_k}{0}=1$ to represent the possibility
		where $a_i=x_{i-1}$ for all $i=2,\dots,k$; considering that there are $x_1-1$ choices
		for $a_1$ so that $a_1 < x_1$, we have that there in total
		\begin{equation}
			\label{eq:binomial1}
			(x_1-1)\left[\binom{n-x_1}{k-1}+\binom{n-x_2}{k-2}+\dots+
			\binom{n-x_{k}}{0}\right] =(x_1-1)\sum_{i=1}^{k}\binom{n-x_i}{k-i}
		\end{equation}
		possible choices for $\{a_1,\dots,a_k\}$ so that $a_1 < x_1$ and $A \in V_X$.
		
		\item Suppose that $a_1=x_1$; since $a_2 > a_1 =x_1$, we have that $X \le A_t$ in this case.
		It remains to establish the conditions so that $A_s \le X$.
		Note that $A_s \le X$ if and only if either $a_i=x_{i}$ for all $2 \le i \le k-1$, or
		for some $2 \le i \le k-1$, we have that
		\[a_j=x_{j} \textrm{ for all } 2 \le j < i, \textrm{ and } x_{i-1} < a_i < x_{i}.\]
		In the latter case, there are $\binom{n-a_i}{k-i}$ choices for the remaining
		$\{a_{i+1},\dots,a_k\}$. Recall that for convenience we take $x_k=n$. Thus, we have in total
		\begin{equation}
			\label{eq:binomial2}
			\sum_{x_1<a_2<x_2}\binom{n-a_2}{k-2}+\sum_{x_2<a_3<x_3}\binom{n-a_3}{k-3}+\dots
			+\sum_{x_{k-1}<a_{k}< x_{k}}\binom{n-a_{k}}{0} +1
		\end{equation}
		possible choices for $\{a_1,\dots,a_k\}$ so that $a_1 =x_1$ and $A \in V_X$.
		Observe that for $1< i\le k$, we have that
		\begin{align*}
			\sum_{x_{i-1}<a_i<x_i}\binom{n-a_i}{k-i}&=\sum^{n-x_{i-1}-1}_{j=n-x_{i}+1}\binom{j}{k-i}\\
			&=\sum_{j=1}^{n-x_{i-1}-1}\binom{j}{k-i}-\sum_{j=1}^{n-x_{i}}\binom{j}{k-i}\\
			&=\binom{n-x_{i-1}}{k-i+1}-\binom{n-x_i+1}{k-i+1},
		\end{align*}
		where the last equality follows from the hockey-stick identity.\footnote{$\sum_{i =r }^n \binom{i}{r}=\binom{n+1}{r+1}$ for all natural numbers $n\ge r$.}
		In particular, if $x_i=x_{i-1}+1$, we have $\sum\limits_{x_{i-1}<a_i<x_i}\binom{n-a_i}{k-i}=0$. 
		
		Rephrasing Equation~\eqref{eq:binomial2}, we obtain 
		\begin{equation}
			\label{eq:rewriting-2}
			\begin{split}
				1+\sum_{i=2}^k\sum_{x_{i-1}<a_i<x_i}\binom{n-a_i}{k-i}&= 1+\sum_{i=2}^k \left[\binom{n-x_{i-1}}{k-i+1}-
				\binom{n-x_i+1}{k-i+1}\right]\\
				&=1+\sum_{i=1}^{k-1}\left[\binom{n-x_i}{k-i}-\binom{n-x_{i+1}+1}{k-i}\right]\\
				&=\sum_{i=1}^{k}\binom{n-x_i}{k-i}-\sum_{i=1}^{k-1}\binom{n-x_{i+1}+1}{k-i}
			\end{split}
		\end{equation}
		
	\end{itemize} 
	Combining Equations~\eqref{eq:binomial1}~and~\eqref{eq:rewriting-2}, we have that
	\begin{equation*}
		|V_X|=x_1\sum_{i=1}^{k}\binom{n-x_i}{k-i}-\sum_{i=1}^{k-1}\binom{n-x_{i+1}+1}{k-i}.
	\end{equation*}
	This concludes the proof. 
\end{proof}

To determine the bag $V_X$ of maximum size, we partition $\mathcal{V}$
into families, and determine the bag of maximum size for each of these families.  

For each $i\in \{1,\dots,n-k+1\}$, let 
\[\mathcal{V}_i:=\{V_X \in \mathcal{V}: X=\{x_1,\dots,x_{k-1}\} \textrm{ and }x_1=i\}.\] 
We now define a family $\mathcal{M}_i$ of $(k-1)$-subsets of $[n-1]$, which will serve as a subset of indices for bags in $\mathcal{V}_i$. Specifically, let $\mathcal{M}_i$ consist of all sets $X = \{x_1, \dots, x_{k-1}\} \in \binom{[n-1]}{k-1}$ such that $x_1 = i$ and, for each $j = 2, \dots, k-1$, the following conditions hold:
\begin{enumerate}[label=$(\alph*)$]
	\item \label{def-Mi-i}  $x_j=i+j-1$ whenever   $n-(k-j)i+1< i+j-1$, and, 
	\item \label{def-Mi-ii}  $x_j\in \{n-(k-j)i,n-(k-j)i+1\}$, otherwise. 
\end{enumerate}
If for some $2 \le j \le k-2$ we have that $n-(k-j)i+1 \ge i+j-1$, then $n-(k-(j+1))i+1 \ge i+(j+1)-1$. This implies that the sequence
$x_1,\dots,x_{k-1}$ is constructed by first applying rule \ref{def-Mi-i} and afterwards (possibly) only applying rule \ref{def-Mi-ii}. In particular, 
the sequence is increasing and thus defines a valid element of $\binom{[n-1]}{k-1}$.

Therefore, each $X \in \mathcal{M}_i$ indexes a bag $V_X \in \mathcal{V}_i$, and we have
\[
\{V_X : X \in \mathcal{M}_i\} \subseteq \mathcal{V}_i.
\]

\begin{lemma}
	\label{lemma:max-x_i_value}
	  Let  $i\in \{1,\dots,n-k+1\}$ and  $X \in \mathcal{V}_i$. Then 
	  \[|V_X|=\max\limits_{Y\in\mathcal{V}_{i}}|V_Y|\textrm{ if and only if }
	X \in \mathcal{M}_i.\]
\end{lemma}
\begin{proof}
 Let	$X\in \mathcal{V}_{i}$  and let $Y=\{y_1,\dots,y_{k-1}\}\in \mathcal{M}_{i}$ .
 We define the distance between $X$ and $Y$ as
 \[d(X,Y):=\sum_{j=1}^{k-1} |x_j-y_j|.\]
  We iteratively construct a sequence 
	\[X=X_0,X_1,\dots,X_m=Y\]
	of elements in $\mathcal{V}_i$, such that:
	\begin{itemize}
	\item[$(1) $] $|V_{X_0}| \le |V_{X_1}|\le \cdots \le |V_{X_m}|$;
	\item[$(2)$] $d(X_{l},Y)= d(X_{l-1},Y)-1$ for all $2 \le l \le m$; and
	\item[$(3)$] $|V_{X_0}|<|V_{X_m}|$ if $X \notin \mathcal{M}_i$.
	\end{itemize}
	
	Suppose that $X_{l-1}=\{x_1,\dots,x_{k-1}\}$ for some $2 \le l\le m$  is the last element so constructed 
	and that  $X_{l-1} \neq Y$. If $x_j<y_j$ for some $2 \le j \le m$, then let $j^\ast$ be the largest
	index such that $x_{j^\ast}<y_{j^\ast}$. Otherwise,  let $j^\ast$ be the smallest
	index such that $x_{j^\ast} > y_{j^\ast}$. Observe that, in both cases, $j^\ast\ne 1$.

	Let 
	\[
	X_l := 
	\begin{cases} 
		X_{l-1} \setminus \{x_{j^\ast}\} \cup \{x_{j^\ast} + 1\}, & \text{if } x_{j^\ast} < y_{j^\ast}, \\
		X_{l-1} \setminus \{x_{j^\ast}\} \cup \{x_{j^\ast} - 1\}, & \text{if } x_{j^\ast} > y_{j^\ast}.
	\end{cases}
	\]
	Suppose $x_{j^\ast}<y_{j^\ast}$. Since $x_{j^\ast}<y_{j^\ast}\le n-1$, we have $x_{j^\ast}+1\le n-1$. Furthermore, by the choice of $j^\ast$, 
	we have $x_{j^\ast+1}=y_{j^\ast+1}\ge y_{j^\ast}+1>x_{j^\ast}+1$. Therefore, $x_{j^\ast}+1\notin X_{l-1}$, ensuring that $X_l$ is well-defined. Now, suppose that $x_{j^\ast}>y_{j^\ast}$. By the choice of $j^*$, we know that  
	$x_{j^\ast-1}=y_{j^\ast-1}\le y_{j^\ast}-1<x_{j^\ast}-1$. Hence, $x_{j^\ast}-1\notin X_{l-1}$, implying that $X_l$ is well-defined. 
In either case, $X_{l} \in \mathcal{V}_i$ and
\[d(X_{l},Y)= d(X_{l-1},Y)-1. \]
This proves $(2)$. We now show $(1)$. 
	
	Suppose that $x_{j^\ast}<y_{j^\ast}$.
	If $y_{j^\ast}$ was constructed by applying 
	rule \ref{def-Mi-i}, then \[y_{j^\ast}=y_1+j^\ast-1=x_1+j^\ast-1\le x_{j^\ast},\]a contradiction.
 	Therefore, $y_{j^\ast} \ge n-(k-j^\ast)x_1$.
	By Proposition~\ref{prop:size-of-bags}, we have that
	\begin{align*} 
		|V_{X_{l}}|-|V_{X_{l-1}}|&=\left[x_1\sum_{\substack{j=1 \\ j \neq j^\ast}}^{k}\binom{n-x_j}{k-j}-\sum_{\substack{j=1 \\ j \neq j^\ast-1}}^{k-1}\binom{n-x_{j+1}+1}{k-j} + x_1\binom{n-(x_{j^\ast}+1)}{k-(j^\ast-1)} -\binom{n-(x_{j^\ast}+1)+1}{k-(j^\ast-1)}\right ] \\
		&\qquad \qquad -\left[x_1\sum_{\substack{j=1 \\ j \neq j^\ast}}^{k}\binom{n-x_j}{k-j}-\sum_{\substack{j=1 \\ j \neq j^\ast-1}}^{k-1}\binom{n-x_{j+1}+1}{k-j} + x_1\binom{n-x_{j^\ast}}{k-(j^\ast-1)} -\binom{n-x_{j^\ast}+1}{k-(j^\ast-1)}\right ] \\
		&=x_1\left[\binom{n-x_{j^\ast}-1}{k-j^\ast}-\binom{n-x_{j^\ast}}{k-j^\ast}\right]-\left[\binom{n-x_{j^\ast}}{k-{j^\ast}+1}-\binom{n-x_{j^\ast}+1}{k-j^\ast+1}\right]\\
		&=\binom{n-x_{j^\ast}}{k-{j^\ast}}-x_1\binom{n-x_{j^\ast}-1}{k-j^\ast-1}\\
		&=\binom{n-x_{j^\ast}-1}{k-j^\ast-1}\left[\frac{(n-(k-j^\ast)x_1)-x_{j^\ast}}{k-j^\ast}\right]\\
		&  \ge 0.
	\end{align*}
	Where in the last line 
	
	\LabelQuote{we have equality if and only if  $x_{j^\ast}=n-(k-j^\ast)x_1$.}{j-ast-1}
	
	Suppose that $x_{j^\ast} > y_{j^\ast}$. 
	By Proposition~\ref{prop:size-of-bags}, we have that
	\begin{align*} 
		|V_{X_{l}}|-|V_{X_{l-1}}|&=\left[x_1\sum_{\substack{j=1 \\ j \neq j^\ast}}^{k}\binom{n-x_j}{k-j}-\sum_{\substack{j=1 \\ j \neq j^\ast-1}}^{k-1}\binom{n-x_{j+1}+1}{k-j} + x_1\binom{n-(x_{j^\ast}-1)}{k-(j^\ast-1)} -\binom{n-(x_{j^\ast}-1)+1}{k-(j^\ast-1)}\right ] \\
		&\qquad \qquad -\left[x_1\sum_{\substack{j=1 \\ j \neq j^\ast}}^{k}\binom{n-x_j}{k-j}-\sum_{\substack{j=1 \\ j \neq j^\ast-1}}^{k-1}\binom{n-x_{j+1}+1}{k-j} + x_1\binom{n-x_{j^\ast}}{k-(j^\ast-1)} -\binom{n-x_{j^\ast}+1}{k-(j^\ast-1)}\right ] \\
		&=x_1\left[\binom{n-x_{j^\ast}+1}{k-j^\ast}-\binom{n-x_{j^\ast}}{k-j^\ast}\right]-\left[\binom{n-x_{j^\ast}+2}{k-{j^\ast}+1}-\binom{n-x_{j^\ast}+1}{k-j^\ast+1}\right]\\
		& = x_1\binom{n-x_{j^\ast}}{k-j^\ast-1}-\binom{n-x_{j^\ast}+1}{k-j^\ast}\\
		& = x_1\binom{n-x_{j^\ast}}{k-j^\ast-1}-\frac{n-x_{j^\ast}+1}{k-j^\ast}\binom{n-x_{j^\ast}}{k-j^\ast-1}  \\
		& = \binom{n-x_{j^\ast}}{k-j^\ast-1}\left[x_1-\frac{n-x_{j^\ast}+1}{k-j^\ast} \right] \\	
		& = \binom{n-x_{j^\ast}}{k-j^\ast-1}\left[\frac{x_{j^\ast}-\left( n-(k-j^\ast)x_1 +1\right )}{k-j^\ast} \right]\\
		& \ge 0,\\		
	\end{align*}
	Where in the last line 
	
	\LabelQuote{we have equality if and only if  $x_{j^\ast}=n-(k-j^\ast)x_1 +1$.}{j-ast-2}
	This proves $(1)$. 
	
	Suppose that $X \notin \mathcal{M}_i$. Let now $2 \le j^\ast \le k$ be the smallest index
	such that $x_{j^\ast}$ does not satisfy rules \ref{def-Mi-i} and \ref{def-Mi-ii}. Let $2 \le l \le m$ such that
	\[x_{j^\ast} \in X_{l-1} \textrm{ and }  x_{j^\ast} \notin X_{l}.\]
	
	Suppose that $x_{j^\ast} < y_{j^\ast}$. If $y_{j^\ast}$ was constructed by applying 
 	rule \ref{def-Mi-i}, then \[y_{j^\ast}=y_1+j^\ast-1=x_1+j^\ast-1\le x_{j^\ast},\] a contradiction.
 	Thus, $y_{j^\ast}$ was constructed by rule \ref{def-Mi-ii} and  $n-(k-j^\ast)i+1 \ge i+j^\ast-1$.
 	This implies that 	\[x_{j^\ast} < n-(k-j^\ast)x_1.\]
 	By \eqref{j-ast-1}, we have that \[|V_{X_{l}}|>|V_{X_{l-1}}|.\]
 	
 	Suppose that $x_{j^\ast} > y_{j^\ast}$. Suppose that $y_{j^\ast}$ was produced by rule \ref{def-Mi-i}.
 	Thus, \[n-(k-j^\ast)i+1< i+j^\ast-1=y_{j^\ast}<x_{j^\ast}.\]
 	By \eqref{j-ast-2}, we have that \[|V_{X_{l}}|>|V_{X_{l-1}}|.\]
 	Suppose that $y_{j^\ast}$ was produced by rule \ref{def-Mi-ii}. Thus, $n-(k-j^\ast)i+1 \ge i+j^\ast-1$
 	and \[x_{j^\ast} > n-(k-j^\ast)i+1.\]
 	By \eqref{j-ast-2}, we have that \[|V_{X_{l}}|>|V_{X_{l-1}}|.\]
 	This proves $(3)$.
\end{proof}

\begin{lemma}\label{lemma:possible-bags}
	Let $X:=\{x_1,\dots,x_{k-1}\} \in V(P)$ that maximizes $|V_X|$. Then $x_1$ is equal to
	\[\left \lfloor \frac{n+1}{k} \right \rfloor \textrm{ or }  \left \lceil \frac{n}{k} \right \rceil.\]
\end{lemma}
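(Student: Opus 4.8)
The plan is to reduce the claim to a one-variable discrete optimization. By Lemma~\ref{lemma:max-x_i_value}, once $x_1$ is fixed the coordinates $x_2,\dots,x_{k-1}$ of a bag of maximum size inside the family $\mathcal{V}_{x_1}$ are essentially pinned down: each free coordinate satisfies $x_i\in\{n-(k-i)x_1,\,n-(k-i)x_1+1\}$, and the remaining ones are clamped to $x_i=x_1+i-1$. I first observe that for $x_1$ near $n/k$ \emph{every} coordinate is free: when $x_1=n/k$ one has $n-(k-i)x_1=i\,x_1$, which exceeds $x_1+i-1$ since $i\,x_1-(x_1+i-1)=(i-1)(x_1-1)\ge 0$. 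Hence in a neighbourhood of the expected optimum the maximum bag size in $\mathcal{V}_{x_1}$ is the explicit function
\[
 M(x_1):=x_1\left[\binom{n-x_1}{k-1}+\sum_{j=0}^{k-2}\binom{jx_1}{j}\right]-\sum_{m=1}^{k-1}\binom{(m-1)x_1+1}{m},
\]
obtained by substituting $x_i=n-(k-i)x_1$ into Proposition~\ref{prop:size-of-bags} (the $\pm1$ ambiguity and a different choice of canonical maximizer affect only lower-order terms). The lemma then amounts to locating the integer maximizer of $M$.

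Next I extract the leading behaviour in $n$. Only the summand $x_1\binom{n-x_1}{k-1}$ has degree $k$ in $n$; every other term has degree at most $k-1$. Since the continuous function $x_1\mapsto x_1(n-x_1)^{k-1}$ is unimodal with maximum at $x_1=n/k$, this already shows the optimum is attained near $n/k$, and a crude comparison of $M(x_1)$ against $M(\lfloor n/k\rfloor)$ lets me discard all $x_1$ outside an $O(1)$ window of $n/k$ — in particular every $x_1$ large enough to leave the free regime, so that the clamped formula never needs to be optimized. It therefore suffices to analyse $M$ on the integers lying within $O(1)$ of $n/k$.

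The heart of the argument is the sign of the forward difference $M(x_1+1)-M(x_1)$ on this window. Differencing the dominant term gives
\[
 (x_1+1)\binom{n-x_1-1}{k-1}-x_1\binom{n-x_1}{k-1}=\frac{n-kx_1-k+1}{k-1}\binom{n-x_1-1}{k-2},
\]
whose sign flips as $x_1$ crosses $(n-k+1)/k=n/k-1+\tfrac1k$; the lower-order terms of $M$ then contribute an $O(1)$ correction to this threshold — already visible when $k=2$, where they move the zero of the difference from $x_1=(n-1)/2$ to $x_1=n/2$. I would compute the full difference exactly, using $\binom{a}{b}-\binom{a-1}{b}=\binom{a-1}{b-1}$ repeatedly, and show that $M(x_1+1)-M(x_1)>0$ for all $x_1\le\lfloor(n+1)/k\rfloor-1$ while $M(x_1+1)-M(x_1)<0$ for all $x_1\ge\lceil n/k\rceil$. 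Since $\lfloor(n+1)/k\rfloor$ and $\lceil n/k\rceil$ are either equal or consecutive integers, and the two inequalities leave exactly the integers in $\{\lfloor(n+1)/k\rfloor,\dots,\lceil n/k\rceil\}$ undetermined, this confines the maximizer to $\{\lfloor(n+1)/k\rfloor,\lceil n/k\rceil\}$, as claimed. The main obstacle is precisely this exact evaluation: the sums $\sum_j\binom{jx_1}{j}$ and $\sum_m\binom{(m-1)x_1+1}{m}$ do not telescope cleanly, so controlling the lower-order contributions — which are exactly what separates the true integer optimum from its neighbours — while also checking that the $\pm1$ free-coordinate choices and the boundary with the clamped regime do not alter the conclusion, is the delicate part of the proof.
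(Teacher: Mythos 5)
Your strategy is coherent in outline, but it has a genuine gap, and it sits exactly where you flag it: this lemma is decided by lower-order terms, and your argument never controls them. The leading-term analysis (unimodality of $x_1(n-x_1)^{k-1}$, sign of the dominant difference $(x_1+1)\binom{n-x_1-1}{k-1}-x_1\binom{n-x_1}{k-1}$) only localizes the maximizer to an $O(1)$ window around $n/k$ --- indeed your own computation puts the dominant threshold at $(n+1)/k-1$, one unit \emph{below} the claimed answer, so the $O(n^{k-2})$-size corrections are not a perturbation but the whole content of the statement. The decisive step, ``show that $M(x_1+1)-M(x_1)>0$ for all $x_1\le\lfloor(n+1)/k\rfloor-1$ and $M(x_1+1)-M(x_1)<0$ for all $x_1\ge\lceil n/k\rceil$,'' is announced but not carried out, and you yourself observe that the substituted sums $\sum_j\binom{jx_1}{j}$ and $\sum_m\binom{(m-1)x_1+1}{m}$ do not telescope. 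A second, related defect: after substituting the interior optimum there is no single function $M$ to difference --- Lemma~\ref{lemma:max-x_i_value} leaves a two-valued choice $x_i\in\{n-(k-i)x_1,\,n-(k-i)x_1+1\}$ in each free coordinate, so you must compare maxima over many near-optimal configurations at $x_1$ and at $x_1+1$; dismissing these choices as ``lower-order'' is inconsistent with the fact that lower-order terms decide the answer, and the clamped/free boundary is waved away on the same insufficient grounds.

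The paper avoids all of this by never substituting the interior coordinates at all. Given a maximizer $X$ with $x_1\notin\{\lfloor\frac{n+1}{k}\rfloor,\lceil\frac{n}{k}\rceil\}$, it perturbs \emph{only} the first coordinate, $Y:=(X\setminus\{x_1\})\cup\{x_1\pm1\}$, keeping $x_2,\dots,x_{k-1}$ fixed. In the difference $|V_Y|-|V_X|$ computed from Proposition~\ref{prop:size-of-bags}, every term with index $i\ge 2$ then cancels identically; for the upward move one gets
\[
|V_Y|-|V_X|=\binom{n-x_1-1}{k-1}\,\frac{n+1-k(x_1+1)}{n-x_1-k+1}+\sum_{i=2}^{k}\binom{n-x_i}{k-i}>0,
\]
with the sign read off in closed form. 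The only structural input needed is the hockey-stick bound $\sum_{i=2}^{k}\binom{n-x_i}{k-i}\le\binom{n-x_1}{k-2}$ (from $x_i\ge x_1+i-1$) for the downward move, with strictness at the boundary case $x_1-1=\lceil\frac{n}{k}\rceil$ supplied by the gap condition $x_2>x_1+1$ extracted from Lemma~\ref{lemma:max-x_i_value}. This local-exchange formulation is precisely what makes the lower-order bookkeeping tractable; if you want to salvage your one-variable plan, the fix is the same idea --- compare $X$ directly to its $x_1$-perturbation rather than differencing a fully substituted $M$.
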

\begin{proof}
Suppose for a contradiction that $x_1 \neq \lfloor \frac{n+1}{k}  \rfloor , \lceil \frac{n}{k} \rceil.$
	We show that there exists $Y \in V(P)$ such that  $|V_Y|>|V_X|$. 
	If $k$ divides $n$ or $n+1$, then  $\lfloor \frac{n+1}{k}  \rfloor = \lceil \frac{n}{k} \rceil$;
	and if $k$ divides neither, then $\lfloor \frac{n+1}{k}  \rfloor =\lfloor \frac{n}{k}\rfloor < \lceil \frac{n}{k} \rceil$;
	therefore, $\lfloor \frac{n+1}{k}  \rfloor \le \lceil \frac{n}{k} \rceil$.
	
	\begin{itemize}
		\item 	Suppose that $x_1<\lfloor\frac{n+1}{k}\rfloor$. Next, we show that $x_2>x_1+1$. Set $i=x_1$ and $j=2$. 
		By Lemma~\ref{lemma:max-x_i_value}, we have $X\in \mathcal{M}_{i}$. 
		If  $n-(k-j)i+1<i+j-1$, then $n-(k-1)i<0$, implying that $i>\frac{n}{k-1}$. 
		Since $i=x_1<\lfloor\frac{n+1}{k}\rfloor$ by hypothesis, this would imply $k>n-1$, which contradicts that $k\le \frac{n}{2}< n-1$. 
		Hence, $x_2$ is selected according to \ref{def-Mi-ii} in the definition of $\mathcal{M}_i$,  which ensures that $x_2\in \{n-(k-2)i,n-(k-2)i+1\}$. Consequently, we obtain $x_2>x_1+1$.   
		
		Now, we show that $n-(k-2)i>i+1$. Suppose otherwise, so $n-1\le (k-1)i$, implying that $i\ge \frac{n-1}{k-1}$. 
		Given that $i=x_1<\lfloor\frac{n+1}{k}\rfloor$ by hypothesis, we would have $k>\frac{n+1}{2}$, a contradiction since $k\le \frac{n}{2}$
		by assumption.  
		Therefore, regardless of the value of $x_2\in \{n-(k-2)i,n-(k-2)i+1\}$, we have $x_2>x_1+1$, as desired.

		Let \[Y:=\{y_1,\dots,y_{k-1}\}=(X\setminus \{x_1\})\cup \{x_1+1\};\]  
		thus, $y_1=x_1+1$ and $y_j=x_j$ for every $j\in \{2,\dots,k-1\}$. 
		Note that this is well-defined because we have already established that $x_2 > x_1 + 1$, ensuring that the choice of $y_1 = x_1 + 1$ is valid.
		By Proposition~\ref{prop:size-of-bags}, we have that
		\begin{align*}
			|V_Y|-|V_X|&= y_1\sum_{i=1}^{k}\binom{n-y_i}{k-i}-\sum_{i=1}^{k-1}\binom{n-y_{i+1}+1}{k-i}\\
			&\qquad -x_1\sum_{i=1}^{k}\binom{n-x_i}{k-i}+\sum_{i=1}^{k-1}\binom{n-x_{i+1}+1}{k-i}\\
			&= y_1\sum_{i=1}^{k}\binom{n-y_i}{k-i}-x_1\sum_{i=1}^{k}\binom{n-x_i}{k-i}\\
			&=(x_1+1)\binom{n-x_1-1}{k-1}+(x_1+1)\sum_{i=2}^{k}\binom{n-x_i}{k-i}\\
			&\qquad \qquad -x_1\binom{n-x_1}{k-1}-x_1\sum_{i=2}^{k}\binom{n-x_i}{k-i}\\
			&=(x_1+1)\binom{n-x_1-1}{k-1}+\sum_{i=2}^{k}\binom{n-x_i}{k-i}-x_1\binom{n-x_1}{k-1}\\
			&>(x_1+1)\binom{n-x_1-1}{k-1}-x_1\binom{n-x_1}{k-1}\\
			&=\frac{(x_1+1)(n-x_1-1)!}{(k-1)!(n-x_1-k)!}-\frac{x_1(n-x_1)!}{(k-1)!(n-x_1-k+1)!}\\
			&=\frac{(n-x_1-1)!}{(k-1)!(n-x_1-k)!}\left[(x_1+1)-\frac{x_1(n-x_1)}{n-x_1-k+1}\right]\\
			&=\binom{n-x_1-1}{k-1}\left[\frac{n+1-k(x_1+1)}{n-x_1-k+1}\right] \\
			&\ge 0.
		\end{align*}
		Where the last inequality is obtained from the following. Since $x_1+1\le \lfloor\frac{n+1}{k}\rfloor$,
		we have that 
		\[n+1-k(x_1+1) \ge n+1-k\left\lfloor \frac{n+1}{k}\right\rfloor \ge n+1-(n+1) = 0. \]
		Recall that $n \notin X$. This implies
		that $x_1 \le x_{k-1}-(k-2) \le n-k+1$ and $n-x_1-k+1 \ge 0$.
		Furthermore, since a strict inequality ($>$) appears in the sequence of steps, this ensures that $|V_Y| > |V_X|$.

		\item Suppose now that $x_1 >\lceil\frac{n}{k}\rceil$. We choose \[Y:=\{y_1,\dots,y_{k-1}\}=(X\setminus \{x_1\})\cup \{x_1-1\};\]
		thus $y_1=x_1-1\ge \lceil \frac{n}{k}\rceil$, and $y_i=x_i$ for $i=2,\dots,k-1$.
		Since $x_i\ge x_1+i-1$, for all $i=2,\dots,k$, we have that
		\begin{align*}
			\binom{n-x_1}{k-2}&=\binom{n-x_1-1}{k-2}+\binom{n-x_1-1}{k-3}\\
			&=\binom{n-x_1-1}{k-2}+\binom{n-x_1-2}{k-3}+\binom{n-x_1-2}{k-4}\\
			&\qquad\qquad\vdots \\
			&=\binom{n-x_1-1}{k-2}+\binom{n-x_1-2}{k-3}+\dots+\binom{n-x_1-k+1}{0}\\
			&=\sum_{i=2}^k\binom{n-(x_1+i-1)}{k-i}\\
			&\ge \sum_{i=2}^k\binom{n-x_i}{k-i}.
		\end{align*}
		Furthermore, if $x_1-1= \lceil \frac{n}{k}\rceil$, we have that $x_2>x_1+1$, and so we obtain that
		\begin{equation}
			\label{eq:x_1greater}
			\binom{n-x_1}{k-2}>\sum_{i=2}^k \binom{n-x_i}{k-i}. 
		\end{equation}
		
		We now compute $|V_X|-|V_Y|$. 
		By Proposition~\ref{prop:size-of-bags}, we have  that 
		\begin{align*}
			|V_X|-|V_Y|&= x_1\sum_{i=1}^{k}\binom{n-x_i}{k-i}-\sum_{i=1}^{k-1}\binom{n-x_{i+1}+1}{k-i}\\
			&\qquad\qquad -y_1\sum_{i=1}^{k}\binom{n-y_i}{k-i}+\sum_{i=1}^{k-1}\binom{n-y_{i+1}+1}{k-i}\\
			&= x_1\sum_{i=1}^{k}\binom{n-x_i}{k-i}-y_1\sum_{i=1}^{k}\binom{n-y_i}{k-i}\\
			&=x_1\binom{n-x_1}{k-1}+x_1\sum_{i=2}^{k}\binom{n-x_i}{k-i}\\
			&\qquad\qquad -(x_1-1)\binom{n-x_1+1}{k-1}-(x_1-1)\sum_{i=2}^{k}\binom{n-x_i}{k-i}\\
			&=x_1\binom{n-x_1}{k-1}+\sum_{i=2}^{k}\binom{n-x_i}{k-i}-(x_1-1)\binom{n-x_1+1}{k-1}\\
			&=x_1\binom{n-x_1}{k-1}-(x_1-1)\left(\binom{n-x_1}{k-1}+\binom{n-x_1}{k-2} \right)+\sum_{i=2}^{k}\binom{n-x_i}{k-i}\\
			&=\binom{n-x_1}{k-1}-(x_1-1)\binom{n-x_1}{k-2}+\sum_{i=2}^{k}\binom{n-x_i}{k-i}\\
			&\le \binom{n-x_1}{k-1}+\binom{n-x_1}{k-2}-(x_1-1)\binom{n-x_1}{k-2}\\
			&=\binom{n-x_1+1}{k-1}-(x_1-1)\binom{n-x_1}{k-2}\\
			&=\frac{n-x_1+1}{k-1}\binom{n-x_1}{k-2}-(x_1-1)\binom{n-x_1}{k-2}\\
			&=\binom{n-x_1}{k-2}\left[\frac{n-x_1+1}{k-1}-(x_1-1)\right]\\
			&=\binom{n-x_1}{k-2}\left[\frac{n-x_1+1-(x_1-1)(k-1)}{k-1}\right]\\
			&=\binom{n-x_1}{k-2}\left[\frac{n-k(x_1-1)}{k-1}\right].
		\end{align*}
		Thus, if $x_1-1> \lceil \frac{n}{k}\rceil$, it follows that $|V_X|<|V_Y|$. On the other hand, if $x_1-1=\lceil \frac{n}{k}\rceil$, then we have $n-k(x_1-1)\le 0$. Furthermore, by Equation~\eqref{eq:x_1greater}, the inequality ($\le$) in our sequence of steps becomes a strict inequality ($<$), ensuring that  $|V_X|<|V_Y|$.  
		In either case, we conclude that $|V_X|< |V_Y|$, as claimed. This completes the proof. 
	\end{itemize}
\end{proof}

We are ready to prove our upper bound on $\trw(F_k(K_n))$ as stated in Theorem~\ref{Theo:twKn}. 
\johnsonk*
\begin{proof}
	Recall that the  width of $(P,\mathcal{V})$ is an upper bound for $\trw(F_k(K_n))$. 
	Let $X:=\{x_1,\dots,x_{k-1}\}$ and $Y:=\{y_1,\dots,y_{k-1}\}$, where 
	\begin{align*}
		{x_i:=\begin{cases}
				\lfloor \frac{n+1}{k}\rfloor & \textrm{if $i=1$,}\\
				n-(k-i)x_1 & \textrm{otherwise,}
		\end{cases}} \qquad \textrm{and}\qquad {y_i:=\begin{cases}
				\lceil\frac{n}{k}\rceil & \textrm{if $i=1$,}\\
				n-(k-i)y_1 & \textrm{otherwise.}
		\end{cases}} 
	\end{align*}
	By Lemmas~\ref{lemma:max-x_i_value}~and~\ref{lemma:possible-bags} it follows that
	$\operatorname{width}(P,\mathcal{V})=\max\{|V_X|,|V_Y|\}-1$. Then, by Proposition~\ref{prop:size-of-bags}, we have that 
	$\operatorname{width}(P,\mathcal{V})$ is equal to
	\begin{align*}
		\max & \left \{ \left \lfloor\frac{n+1}{k} \right \rfloor\binom{n-\lfloor\frac{n+1}{k}\rfloor}{k-1}+\left \lfloor\frac{n+1}{k} \right \rfloor\sum\limits_{i=2}^{k}\binom{(k-i)\lfloor\frac{n+1}{k}\rfloor}{k-i}-\sum\limits_{i=1}^{k-1}\binom{(k-i-1)\lfloor\frac{n+1}{k}\rfloor+1}{k-i}, \right.  \\
		&  \left. \left \lceil \frac{n}{k} \right \rceil \binom{n-\lceil \frac{n}{k} \rceil}{k-1}+\left \lceil \frac{n}{k} \right \rceil\sum\limits_{i=2}^{k}\binom{(k-i)\lceil \frac{n}{k} \rceil}{k-i}-\sum\limits_{i=1}^{k-1}\binom{(k-i-1)\lceil \frac{n}{k} \rceil+1}{k-i} \right \}-1.
	\end{align*}
	Finally, note that if $k$ divides $n$ or $n+1$, then $\lfloor\frac{n+1}{k}\rfloor=\lceil \frac{n}{k}\rceil$.
\end{proof}

We conclude this section by establishing the precise value of our upper bound on $\trw(F_3(K_n))$.
\johnsonthree*
\begin{proof}
	Let $X$ and $Y$ be as defined in the proof of Theorem~\ref{Theo:twKn}.
	Notice that $\lfloor \frac{n+1}{3} \rfloor \le \lceil \frac{n}{3} \rceil$, and if equality holds, our upper bound is the desired value.
	Suppose now that $\lfloor \frac{n+1}{3} \rfloor < \lceil \frac{n}{3} \rceil$, implying $n \equiv 1 \pmod{3}$.
	Let $t := \lfloor \frac{n+1}{3} \rfloor$, so $t+1 = \lceil\frac{n}{3}\rceil$ and $n = 3t+1$.
	Then,  
 	\begin{align*} 
		|V_Y|-|V_X|&= (t+1)\left[\binom{n-t-1}{2}+\binom{t+1}{1}+\binom{0}{0}\right]-\left[\binom{t+2}{2}+\binom{1}{1}\right]\\ &
		\qquad\quad -t\left[\binom{n-t}{2}+\binom{t}{1}+\binom{0}{0}\right]+\left[\binom{t+1}{2}+\binom{1}{1}\right]   \\ 
		& =(t+1)\left[\binom{n-t-1}{2}+t+2\right]-\binom{t+2}{2}-t\left[\binom{n-t}{2}+t+1\right]+\binom{t+1}{2}\\
		& =(t+1)\binom{n-t-1}{2}+(t+1)(t+2)-\binom{t+2}{2}-t\binom{n-t}{2}-t(t+1)+\binom{t+1}{2} \\
		& =(t+1)\binom{n-t-1}{2}-t\binom{n-t}{2}+t+1\\
		&=\binom{n-t-1}{2}-t(n-t-2)+1\\
		&=(n-t-2)\left(\frac{n-t-1}{2}-t\right)+1\\
		&=(n-t-2)\frac{n-3t-1}{2}+1\\
		&=1. 
	\end{align*}
	Hence, $|V_Y| > |V_X|$. Finally, we have
	\begin{align*}
		\trw(F_3(K_n))\le |V_Y|-1&=\left\lceil\frac{n}{3}\right\rceil \left[\binom{n-\left\lceil\frac{n}{3}\right\rceil}{2}+\left\lceil\frac{n}{3}\right\rceil+1\right]-\left[\binom{\left\lceil\frac{n}{3}\right\rceil+1}{2}+1\right]-1 \\
		&=\left\lceil\frac{n}{3}\right\rceil \binom{n-\left\lceil\frac{n}{3}\right\rceil}{2}+\frac{1}{2}\left\lceil\frac{n}{3}\right\rceil\left(\left\lceil\frac{n}{3}\right\rceil+1\right)-2.
	\end{align*}
\end{proof}

\section{Summary of results and conjectures}{\label{sec:SR&C}}

In this paper, we have established tight asymptotic lower and upper bounds on the
treewidth of token graphs for three families of graphs. The first two of
them are special cases of trees (stars and paths), for which we obtained
that the treewidth of their $k$-token graphs, for a fixed integer $k$, is $\Theta(n^{k-1})$.
Additionally, we have shown that the treewidth of the $k$-token graph of
the complete graph $K_n$ (Johnson graph) is, for a fixed integer $k$, equal to
$\Theta(n^{k})$. These results lead us to formulate the following
conjectures:

\begin{conjecture}
	\label{conj:trees}
	If  $2 \le k \le n-2$ is a fixed integer and $G$ is a tree on $n$
	vertices, then
	\[
	\trw(F_k(G)) \in\Theta(n^{k-1}).
	\]
\end{conjecture}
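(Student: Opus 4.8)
The plan is to prove $\trw(F_k(G))=\Theta(n^{k-1})$ for every tree $G$ by establishing an $O(n^{k-1})$ upper bound that holds for all trees, and an $\Omega(n^{k-1})$ lower bound via a dichotomy on $\Delta(G)$. For the upper bound I would use a recursive balanced-separator decomposition. Choosing a centroid $v$ of $G$, every component of $G-v$ has at most $n/2$ vertices, and the set of configurations that place a token at $v$ has size $\binom{n-1}{k-1}=O(n^{k-1})$. Deleting these configurations disconnects $F_k(G)$: a token can never cross $v$, so the number of tokens inside each component $C_i$ of $G-v$ is conserved, and each remaining piece is isomorphic to a Cartesian product $\square_i\, F_{k_i}(C_i)$ with $\sum_i k_i=k$ and $\sum_i|C_i|\le n-1$. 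The same separator works inside each product piece (separate the factor whose base tree is largest through its own centroid; removing the configurations that fix one token there leaves $O(n^{k-1})$ vertices). Since the underlying base trees halve in size, summing the balanced-separator sizes $O((n/2^{j})^{k-1})$ along the recursion---and using that treewidth is within a constant factor of the separation number, which removes any logarithmic loss---gives $\trw(F_k(G))=O(n^{k-1})$.

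For the lower bound, first suppose $\Delta(G)\ge \varepsilon n$ for the threshold $\varepsilon=1/(2k)$. Then $G$ contains a star $S_m$ with $m\ge \varepsilon n$ as a subgraph, so $F_k(S_m)$ is a subgraph of $F_k(G)$, and by Theorem~\ref{treewidth_star} together with the subgraph-monotonicity of treewidth we get $\trw(F_k(G))\ge \trw(F_k(S_m))=\Omega(m^{k-1})=\Omega(n^{k-1})$. This case is exactly why the star itself must be handled spectrally rather than by products: when all the mass concentrates at one vertex, $G$ has no $k$ disjoint large subtrees.

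Now suppose $\Delta(G)<\varepsilon n$. Here the low maximum degree prevents the mass from concentrating at a single cut vertex, so a greedy extraction (repeatedly cutting a deepest subtree whose size first exceeds $n/(2k)$, whence all of its child-subtrees are already smaller than $n/(2k)$ and a jump argument controls the part size) yields $k$ vertex-disjoint connected subtrees $T_1,\dots,T_k$, each with $\Theta(n/k)$ vertices. By Lemma~\ref{lem:cartesian}, $\trw(F_k(G))\ge \trw(T_1\square\cdots\square T_k)$, so everything reduces to lower-bounding the treewidth of a Cartesian product of $k$ connected graphs.

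The crux---and the step I expect to be the main obstacle---is the following \emph{Key Lemma}: if $H_1,\dots,H_k$ are connected with $|V(H_i)|=m_i$ (sorted $m_1\ge\cdots\ge m_k$), then $\trw(H_1\square\cdots\square H_k)=\Omega\!\left(\prod_{i\ge 2} m_i\right)$, which applied to the balanced subtrees gives $\Omega((n/k)^{k-1})=\Omega(n^{k-1})$. For $k=2$ I can prove this cleanly with a \emph{cross} bramble: take all sets $(H_1\square\{h\})\cup(\{g\}\square H_2)$; connectivity of the factors makes each cross connected, any two crosses meet (a fiber meets a cofiber), and a hitting set must meet every fiber or every cofiber, forcing order $\ge \min(m_1,m_2)$ and hence $\trw\ge \min(m_1,m_2)-1$ by Theorem~\ref{thm:tw-bramble}. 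The difficulty is the higher-dimensional version, where the bramble elements must become $(k-1)$-dimensional slabs transverse to the longest factor, with hitting number $\Omega(\prod_{i\ge 2}m_i)$: this is precisely what specializes, when all $H_i$ are paths, to the grid bound of Theorem~\ref{structural}, but it must now hold for arbitrary connected factors so that it uniformly covers path-like, star-like, and bushy subtrees (e.g. balanced binary trees, which have neither a long path nor a high-degree vertex). Establishing this product-treewidth lemma in full---a common generalization of Theorem~\ref{structural} and of the spectral estimate of Theorem~\ref{chandran}---is where the real work lies; the case analysis above and the upper bound are comparatively routine.
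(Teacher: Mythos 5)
This statement is Conjecture~\ref{conj:trees}: the paper offers no proof of it (it proves only the star and path cases, in Theorem~\ref{treewidth_star} and Proposition~\ref{proposition_tw_tree}), so your attempt must stand on its own, and it does not: by your own admission the ``Key Lemma'' --- that $\trw(H_1\square\cdots\square H_k)=\Omega\bigl(\prod_{i\ge 2}m_i\bigr)$ for arbitrary connected factors --- is left unproven, and for $k\ge 3$ this is not a routine gap but a genuine open-looking strengthening of Theorem~\ref{structural} (your cross-bramble argument really does only settle $k=2$). What you have is a reduction program, not a proof, and even the reduction is broken. Concretely, your dichotomy fails for spiders: take $G$ a spider with $\sqrt{n}$ legs of length $\sqrt{n}$ each. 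Then $\Delta(G)=\sqrt{n}<\varepsilon n$, so you land in the second branch; but every connected subgraph of $G$ avoiding the center lies inside a single leg and has at most $\sqrt{n}$ vertices, so no two (let alone $k$) vertex-disjoint connected subtrees of size $\Theta(n/k)$ exist, and no ``jump argument'' can manufacture them --- the mass \emph{does} concentrate at a single cut vertex despite the low degree, contradicting the premise of your extraction step. Falling back to the first branch does not rescue you: the largest star subgraph gives only $\Omega(n^{(k-1)/2})$ via Theorem~\ref{treewidth_star}, and even the spectral route (Theorem~\ref{chandran} with $\lambda_2(G)=\Theta(1/n)$, since $G$ contains a path on $2\sqrt{n}$ vertices, and $\Delta(F_k(G))\le k\sqrt{n}$) yields only $\Theta(n^{k-3/2})$. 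So the spider escapes every tool you assemble, which is a good indication of why this remains a conjecture.

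Two smaller points. In the upper bound, your accounting ``the underlying base trees halve, so the separator sizes are $O((n/2^j)^{k-1})$'' is wrong as stated: splitting a product piece $\square_i F_{k_i}(C_i)$ through the centroid of its largest factor halves only that factor, while the others retain their size, so the separator at depth $j$ is not $O((n/2^j)^{k-1})$. The centroid-separator idea itself (all configurations with a token at $v$ form a separator of size $\binom{n-1}{k-1}$, and $F_k$ of a disconnected graph splits into Cartesian products over token distributions) is sound and is probably the most salvageable part of your proposal, but to conclude $\trw=O(n^{k-1})$ you must verify balanced separators of size $O(n^{k-1})$ for \emph{every} vertex subset of $F_k(G)$ (a weighted-centroid argument, which you do not give) before invoking a separation-number-to-treewidth theorem; summing separators along a recursion is not by itself a treewidth bound. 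Also, in the first branch of your dichotomy you should note that applying Theorem~\ref{treewidth_star} to $F_k(S_m)$ inside $F_k(G)$ uses subgraph monotonicity of treewidth on the induced copy of $F_k(S_m)$ spanned by configurations within $V(S_m)$; that step is fine, but it is the only branch of your argument that is actually complete.
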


\begin{conjecture}
	\label{conj:complete}
	If  $2 \le k \le n-2$ is a fixed integer, then the tree
	decomposition of the  Johnson graph given in Section
	\ref{sec:fkkn} is optimal, and therefore  
	\[
	\prw(F_{k}(K_{n})) = \trw(F_{k}(K_{n})).
	\] 
	Furthermore, the treewidth of the Johnson graph is equal to the
	upper bound given in Theorem \ref{Theo:twKn}.
\end{conjecture}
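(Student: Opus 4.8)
The plan is to reduce the conjecture to a single inequality and then attack that inequality with a bramble. By Theorem~\ref{Theo:twKn} the explicit path decomposition $(P,\mathcal{V})$ has width exactly $W$, where $W$ denotes the displayed maximum minus one; consequently $\trw(F_k(K_n)) \le \prw(F_k(K_n)) \le W$ holds unconditionally. Thus both asserted equalities --- that $(P,\mathcal{V})$ is optimal and that $\prw = \trw$ --- follow at once if one proves the matching lower bound $\trw(F_k(K_n)) \ge W$. By the Seymour--Thomas duality (Theorem~\ref{thm:bramble}), $\trw = \brn - 1$, so it suffices to exhibit a bramble of $F_k(K_n)$ of order $W+1$. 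The whole problem therefore collapses to a single bramble construction, and everything else in the conjecture is bookkeeping against the exact bag-size formula of Proposition~\ref{prop:size-of-bags}, maximized by Lemmas~\ref{lemma:max-x_i_value}~and~\ref{lemma:possible-bags} at $x_1 \in \{\lfloor (n+1)/k\rfloor, \lceil n/k\rceil\}$.

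The construction I would attempt generalizes the bramble that settled $F_2(K_n)$. There the vertices were the edges of $K_n$ and the bramble elements were the edge sets of long paths; any two long paths shared a vertex of $K_n$ by a pigeonhole count, which forced touching, and the Faudree--Schelp bound (Theorem~\ref{theorem_extremal}) controlled a minimum hitting set through its complement. For general $k$ I would identify each vertex of $F_k(K_n)$ with a $k$-subset of $[n]$ and, for each ground subset $R \subseteq [n]$ of a fixed size $s$, take the family $B_R := \binom{R}{k}$ of all $k$-subsets contained in $R$. Each $B_R$ induces a sub-Johnson graph $J(s,k)$, hence is connected for $s \ge k+1$. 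Two families $B_R, B_{R'}$ touch as soon as $|R \cap R'| \ge k-1$: if the intersection has size $\ge k$ they share a vertex of $F_k(K_n)$, and if it equals $k-1$ one obtains a Johnson edge by extending the common $(k-1)$-set by one element of $R\setminus R'$ and one of $R'\setminus R$. Since $|R \cap R'| \ge 2s - n$, choosing $s := \lceil (n+k-1)/2 \rceil$ guarantees pairwise touching, so $\mathcal{B} := \{B_R : |R| = s\}$ is a bramble. I would carry this out in three steps: first fix $s$ (or a refined variant) so that the resulting hitting-set bound lands exactly on $W+1$; second verify the two bramble axioms via the support-overlap argument just sketched; third lower-bound every hitting set by bounding its complement. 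Here $S$ hits $\mathcal{B}$ if and only if, for no $s$-set $R$, the complement $S^c$ contains all of $\binom{R}{k}$; that is, $S^c$ is a $k$-uniform family containing no complete sub-hypergraph $K_s^{(k)}$, and its size is governed by a hypergraph Turán estimate playing the role Faudree--Schelp played at $k=2$.

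The hard part will be matching the constant exactly. The correct order of magnitude is already secure from two independent directions: Proposition~\ref{lower_bound_kn} gives $\trw(F_k(K_n)) = \Omega(n^k)$ from algebraic connectivity, and Lemma~\ref{lem:cartesian} applied to $k$ disjoint cliques of size $\lfloor n/k\rfloor$ embeds a Hamming graph whose treewidth is likewise $\Theta(n^k)$. But neither reaches the precise leading and lower-order terms of $W$: the factor $1/(12k\,k!)$ inherited from the lossy bound of Theorem~\ref{chandran} is far from the constant appearing in Proposition~\ref{prop:size-of-bags}. Proving the conjecture requires the extremal estimate for $K_s^{(k)}$-free $k$-uniform families to be tight against the exact bag-size formula, and for $k \ge 3$ the relevant Turán density is a notoriously open problem, so the off-the-shelf bound does not exist and the full-clique families $B_R$ may well be the wrong bramble. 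I would therefore also keep a fallback route through Theorem~\ref{theorem_minimax}, arguing that the lexicographic ordering underlying $(P,\mathcal{V})$ minimizes the max border $MB_\pi$; this reformulates the lower bound as a vertex-isoperimetric inequality for the Johnson graph over all vertex orderings. Either way, the decisive and work-dominating step is pinning down a tight extremal/isoperimetric quantity to all relevant orders, which is exactly why the statement stands as a conjecture rather than a theorem.
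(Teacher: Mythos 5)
You should note first that the statement you were given is a \emph{conjecture}: the paper contains no proof of it, so there is nothing in the paper to match your argument against, and your text is honestly a research plan rather than a proof (you say as much yourself). The bookkeeping part of your plan is correct: Theorem~\ref{Theo:twKn} gives $\trw(F_k(K_n))\le\prw(F_k(K_n))\le W$ unconditionally, so optimality of $(P,\mathcal{V})$ and the equality $\prw=\trw$ would indeed all follow from the single lower bound $\trw(F_k(K_n))\ge W$, which by Theorem~\ref{thm:bramble} reduces to exhibiting a bramble of order $W+1$. Your verification of the bramble axioms for $\mathcal{B}=\{\binom{R}{k}:|R|=s\}$ with $s=\lceil(n+k-1)/2\rceil$ is also sound: $|R\cap R'|\ge 2s-n\ge k-1$, and a common $(k-1)$-set extended by $a\in R\setminus R'$ and $b\in R'\setminus R$ yields a Johnson edge.

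The genuine gap is that your concrete construction fails quantitatively, and not only because hypergraph Tur\'an numbers are open --- it already fails at $k=2$, where everything is known. At $k=2$ your families $B_R$ are the edge sets of cliques $K_s$ with $s\approx n/2$, so a set $S$ hits $\mathcal{B}$ exactly when its complement is $K_s$-free; but by Tur\'an's theorem a $K_s$-free graph with $s\approx n/2$ may contain $\left(1-\frac{1}{s-1}\right)\binom{n}{2}$ edges, i.e.\ almost all of them, forcing only $|S|=O(n)$ and a treewidth lower bound of order $n$ rather than the required $\approx n^2/4$. This is precisely why the paper's bramble for $F_2(K_n)$ uses \emph{paths} on $\frac{n+1}{2}$ vertices instead of cliques: the Faudree--Schelp bound (Theorem~\ref{theorem_extremal}) caps the path-free complement at $2\binom{(n-1)/2}{2}\approx n^2/4$, which is what makes the count land exactly on the width. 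No choice of $s$ rescues the full-clique families; the correct generalization would need sparse connected $k$-uniform structures (tight paths or tight trees), whose extremal numbers for $k\ge 3$ are governed by open Kalai-type conjectures --- and your fallback through Theorem~\ref{theorem_minimax} runs into the equally open problem of exact vertex-isoperimetry in Johnson graphs. So your diagnosis of why the statement remains a conjecture is accurate, but the specific bramble you propose would already collapse at your own ``step one''.
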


\bibliographystyle{abbrv}
\bibliography{References_treewidth}

\begin{thebibliography}{10}

\bibitem{double_vertex}
Y.~Alavi, M.~Behzad, P.~Erd{\H{o}}s, and D.~R. Lick.
\newblock Double vertex graphs.
\newblock {\em J. Combin. Inform. System Sci}, 16(1):37--50, 1991.

\bibitem{Amini}
O.~Amini and J.~Kool.
\newblock A spectral lower bound for the divisorial gonality of metric graphs.
\newblock {\em Int. Math. Res. Not. IMRN}, 2016(8):2423--2450, 2016.

\bibitem{audenaert}
K.~Audenaert, C.~Godsil, G.~Royle, and T.~Rudolph.
\newblock Symmetric squares of graphs.
\newblock {\em J. Combin. Theory Ser. B}, 97(1):74--90, 2007.

\bibitem{barik}
S.~Barik and P.~Verma.
\newblock Spectral properties of token graphs.
\newblock {\em Linear Algebra and its Applications}, 687:181--206, 2024.

\bibitem{treewidth}
H.~Bodlaender.
\newblock A tourist guide through treewidth.
\newblock {\em Acta Cybernet.}, 11(1-2), 1993.

\bibitem{bondy}
J.~Bondy and U.~Murty.
\newblock {\em Graph Theory with Applications}.
\newblock Macmillan Press, London, 1976.

\bibitem{caputo}
P.~Caputo, T.~Liggett, and T.~Richthammer.
\newblock Proof of aldous’ spectral gap conjecture.
\newblock {\em J. Amer. Math. Soc.}, 23(3):831--851, 2010.

\bibitem{Chandran}
L.~S. Chandran and C.~Subramanian.
\newblock A spectral lower bound for the treewidth of a graph and its
  consequences.
\newblock {\em Inform. Process. Lett.}, 87(4):195--200, 2003.

\bibitem{Laplacian}
C.~Dalfó, F.~Duque, R.~Fabila-Monroy, M.~Fiol, C.~Huemer, A.~Trujillo-Negrete,
  and F.~{Zaragoza Martínez}.
\newblock On the laplacian spectra of token graphs.
\newblock {\em Linear Algebra Appl.}, 625:322--348, 2021.

\bibitem{oldandnew}
N.~M.~M. De~Abreu.
\newblock Old and new results on algebraic connectivity of graphs.
\newblock {\em Linear Algebra Appl.}, 423(1):53--73, 2007.

\bibitem{Token}
R.~Fabila-Monroy, D.~Flores-Pe{\~n}aloza, C.~Huemer, F.~Hurtado, J.~Urrutia,
  and D.~R. Wood.
\newblock Token graphs.
\newblock {\em Graphs Combin.}, 28(3):365--380, 2012.

\bibitem{Faudree}
R.~J. Faudree and R.~H. Schelp.
\newblock Path ramsey numbers in multicolorings.
\newblock {\em J. Combin. Theory Ser. B}, 19(2):150--160, 1975.

\bibitem{linegraph}
D.~J. Harvey and D.~R. Wood.
\newblock Treewidth of the line graph of a complete graph.
\newblock {\em J. Graph Theory}, 79(1):48--54, 2015.

\bibitem{Structural}
R.~Hickingbotham and D.~R. Wood.
\newblock Structural properties of graph products.
\newblock {\em J. Graph Theory}, 2023.

\bibitem{Johns}
G.~L. Johns.
\newblock {\em Generalized distance in graphs}.
\newblock PhD thesis, Western Michigan University, 1988.

\bibitem{lucena}
B.~O. Lucena.
\newblock {\em Dynamic programming, tree-width and computation on graphical
  models}.
\newblock PhD thesis, Brown University, 2002.

\bibitem{rudolph}
T.~Rudolph.
\newblock Constructing physically intuitive graph invariants.
\newblock {\em arXiv preprint arXiv:quant-ph/0206068}, 2002.

\bibitem{Seymour_Thomas}
P.~D. Seymour and R.~Thomas.
\newblock Graph searching and a min-max theorem for tree-width.
\newblock {\em J. Combin. Theory Ser. B}, 58(1):22--33, 1993.

\bibitem{ktuple}
B.~W. Zhu, J.~Liu, D.~R. Lick, and Y.~Alavi.
\newblock {$n$}-tuple vertex graphs.
\newblock In {\em Proceedings of the {T}wenty-third {S}outheastern
  {I}nternational {C}onference on {C}ombinatorics, {G}raph {T}heory, and
  {C}omputing ({B}oca {R}aton, {FL}, 1992)}, volume~89, pages 97--106, 1992.

\end{thebibliography}

\end{document}